\newcommand{\rr}[1]{\textcolor{black}{#1}}
\newcommand{\OurConstant}{4}
\newcommand{\bestlzero}{\smash{p_n^{\ell_0}\!}}
\newcommand{\bestlone}{\smash{p_n^{\ell_1}\!}}
\newcommand{\bestLzero}{\smash{p_n^{L_0}\!}}
\newcommand{\bestLinf}{\smash{p_n^{L_\infty}\!}}
\newcommand{\bestLone}{\smash{p_n^{L_1}\!}}
\newcommand{\pcheb}{\smash{p_n^{{\rm cheb}}}}
\newcommand{\betaConstant}{\beta}
\newcommand{\pLP}{\smash{p_n}}
\DeclareMathOperator*{\minimize}{min}
\DeclareMathOperator*{\subjectto}{subject\ to}
\definecolor{bluey}{rgb}{0.0,0.2,0.5}
\definecolor{bluey}{rgb}{0,0,0}
\definecolor{rev1}{rgb}{0.79,0.15,0.06}
\definecolor{rev1}{rgb}{0,0,0}
\newcommand{\RefOne}[1]{\color{rev1}#1}
\definecolor{rev2}{rgb}{0.8,0.8,0.01}
\definecolor{rev2}{rgb}{0,0,0}
\newcommand{\RefTwo}[1]{\color{rev2}#1}
\active\gdef@{\mkern1mu}}
\title{Error localization of best L1 polynomial approximants\thanks{Submitted to the editors \today.
\funding{The National Institute of Informatics in Tokyo partially funded an extended collaboration visit between the authors in December 2018, where the majority of this research took place. The first author is supported by the JSPS grants no.~17H01699 and 18H05837. The second author is supported by the National Science Foundation grant no.~1818757.}}}
\author{Yuji Nakatsukasa\thanks{
Mathematical Institute, University of Oxford, Oxford, OX2 6GG,
UK (\texttt{nakatsukasa@maths.ox.ac.uk}).
%National Institute of Informatics, 2-1-2 Hitotsubashi, Chiyoda-ku, Tokyo 101-8430, Japan.  (\texttt{nakatsukasa@nii.ac.jp}).
}
 \and Alex Townsend\thanks{
Department of Mathematics, Cornell University, Ithaca, NY 14853. (\texttt{townsend@cornell.edu}).}
}
\begin{document}
\maketitle

\begin{abstract}
An important observation in compressed sensing is that the $\ell_0$ minimizer of an underdetermined linear system is equal to the $\ell_1$ minimizer when there exists a sparse solution vector {\RefOne{and a certain restricted isometry property holds}}. Here, we develop a continuous analogue of this observation and show that the best $L_0$ and $L_1$ polynomial approximants of a polynomial that is corrupted on a set of small measure are nearly equal. We go on to demonstrate an error localization property of best $L_1$ polynomial approximants and use our observations to develop an improved algorithm for computing best $L_1$ polynomial approximants to continuous functions. 
\end{abstract}

\begin{keywords}
polynomial approximation, best $L_1$, compressed sensing, best $L_0$, restricted isometry property, error localization
\end{keywords}

\begin{AMS}
65F15, 15A18, 15A22
\end{AMS}

\section{Introduction}
In compressed sensing the $\ell_0$ minimizer of an underdetermined linear system $Ax = b$ can be exactly recovered by the $\ell_1$ minimizer when the $\ell_0$ minimizer is sufficiently sparse and $A$ satisfies some regularity conditions~\cite{candes2006robust,donoho2006compressed,foucart2013mathematical}. Similarly, when an acquired signal is sparsely corrupted, one can exactly recover the original signal by minimizing the $\ell_1$ error, under suitable assumptions~\cite{candes2005decoding}.  In this paper, we investigate a continuous analogue of this phenomenon and show that the best $L_0$ and $L_1$ polynomial approximants of corrupted polynomials (see~\cref{def:CorruptedFunction}) are equal, under suitable assumptions (see~\cref{sec:L0L1}). We also make precise a related observation that the best $L_1$ error can be concentrated to intervals of small measure, showing that they can be advantageous compared to minimax approximants for certain applications (see~\cite{trefethen2011six}). 

Let $f:[-1,1]\rightarrow \mathbb{R}$ be a continuous function and $n\geq 0$ an integer. The best $L_1$ polynomial approximant, $\bestLone$, of degree $\leq n$ to $f$ exists, is unique~\cite[Thm.~14.3]{powell1981approximation}, and satisfies
\begin{equation} 
\|f - \bestLone\|_{1} = \min_{p\in\mathcal{P}_n} \| f - p\|_{1}, \qquad \| f - p\|_{1} = \int_{-1}^1 \left|f(x) - p(x)\right| dx,
\label{eq:L1problem} 
\end{equation} 
where $\mathcal{P}_n$ is the space of polynomials of degree $\leq n$. While the minimax approximant, $\bestLinf$, is the best approximant in the sense that $\|f - \bestLinf\|_{\infty} = \min_{p\in\mathcal{P}_n} \| f - p\|_{\infty}$, where $\|\cdot\|_{\infty}$ is the maximum norm, we know by the equioscillation theorem that the maximum deviation is attained $\geq n+2$ times~\cite[Thm.~7.2]{powell1981approximation}. On the other hand, it can frequently be observed that $|f(x) - \bestLone(x)| \ll \|f - \bestLinf\|_{\infty}$ for most, but not all, $x\in[-1,1]$ (see~\cref{fig:FirstExample} and~\cref{sec:theory}). To make this observation precise, we define the set\footnote{The constant of $1/2$ in the definition of $\Omega_n$ (see~\cref{eq:Omega}) is an arbitrary choice as any constant in $(0,1)$ would do, with very minor changes to the results that we derive.}
\begin{equation}
\Omega_n = \left\{x\in[-1,1] : |f(x) - \bestLone(x)| \geq \frac{1}{2}\|f - \bestLinf\|_{\infty}\right\}.
\label{eq:Omega}
\end{equation} 
For any $x\in [-1,1]\setminus \Omega_n$ we know that $\bestLone(x)$ is a better approximation to $f(x)$ than $\bestLinf(x)$. By the definition of $\bestLinf$, $\Omega_n$ is not the empty set, but we often observe that $|\Omega_n|\rightarrow 0$ as $n\rightarrow \infty$ (see~\cref{sec:theory}). For example, in~\cref{sec:theory} we prove that $|\Omega_n| = \mathcal{O}(n^{-2}\log n)$ for $f(x) = \sqrt{1-x^2}$ and $|\Omega_n| = \mathcal{O}(n^{-1})$ for $f(x) = |x|$. In such cases we say that the error $f-\bestLone$ is ``highly localized". This property of best $L_1$ approximation seems to be underappreciated and is related to observations from compressed sensing.
\begin{figure}
\centering 
\begin{overpic}[width=.95\textwidth]{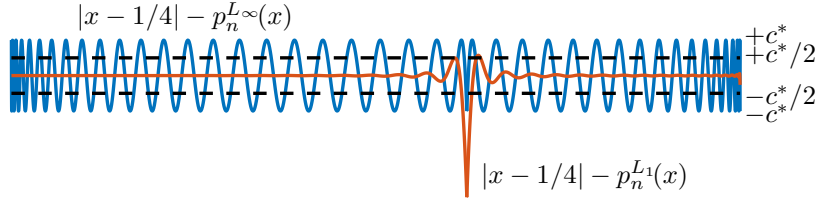}
\put(20,22) {$|x-1/4|-\bestLinf (x)$}
\put(63,5) {$|x-1/4|-\bestLone(x)$}
\put(91,20) {$+c^*$}
\put(91,18) {$+c^*/2$}
\put(91,13.5) {$-c^*/2$}
\put(91,11.5) {$-c^*$}
\end{overpic}
\caption{The errors $f(x)-\bestLinf(x)$ (blue line) and $f(x)-\bestLone(x)$ (red line) for $f(x) = |x-1/4|$ on $[-1,1]$ when $n = 80$. While $f(x)-\bestLinf(x)$ has a smaller absolute maximum on $[-1,1]$, we find that $|f(x)-\bestLone(x)|\leq c^*/2$ for most $x$ in $[-1,1]$, where $c^* = \|f - \bestLinf\|_{\infty}$. Similar illustrations can be found in~\cite[Chap.~16]{Trefethen_13_01} and~\cite{trefethen2011six}.}
\label{fig:FirstExample} 
\end{figure} 

The highly localized nature of $f-\bestLone$ means that best $L_1$ polynomial approximation is ideal for recovering functions that have been arbitrarily corrupted on a set of small measure. 
\begin{definition}
 For $0\leq s<1$, we say that a function $f:[-1,1]\rightarrow\mathbb{R}$ is a $s$-corrupted function 
 if $f$ can be written as 
\[
f(x) = g(x) + \omega(x),
\]
where $g:[-1,1]\rightarrow\mathbb{R}$ is a continuous function, $\omega(x)$ is a measurable function with $|{\rm supp}(\omega)| \leq s$, and $|{\rm supp}(\omega)|$ denotes the Lebesgue measure of the support of $\omega$ on $[-1,1]$. Note that the support of $\omega$, denoted by ${\rm supp}(\omega)$, is a closed subset of $[-1,1]$. 
\label{def:CorruptedFunction} 
\end{definition}
If $g=p_m$ is a polynomial of degree $\leq m$ in~\cref{def:CorruptedFunction}, then we say that $f$ is a corrupted polynomial. If, in addition, $s<\min(1,1/(\OurConstant n^2))$ for some integer $n\geq m$, then one finds that the best $L_1$ polynomial approximant of degree $\leq n$ to $f$ is unique and $\bestLone = p_m$ (see~\cref{cor:L1L0condition}). This means that best $L_1$ approximation exactly recovers a corrupted polynomial with arbitrary corruption, provided that the corruption has small enough support.

\begin{figure}
 \centering 
\begin{minipage}{.49\textwidth}
\begin{overpic}[width=\textwidth]{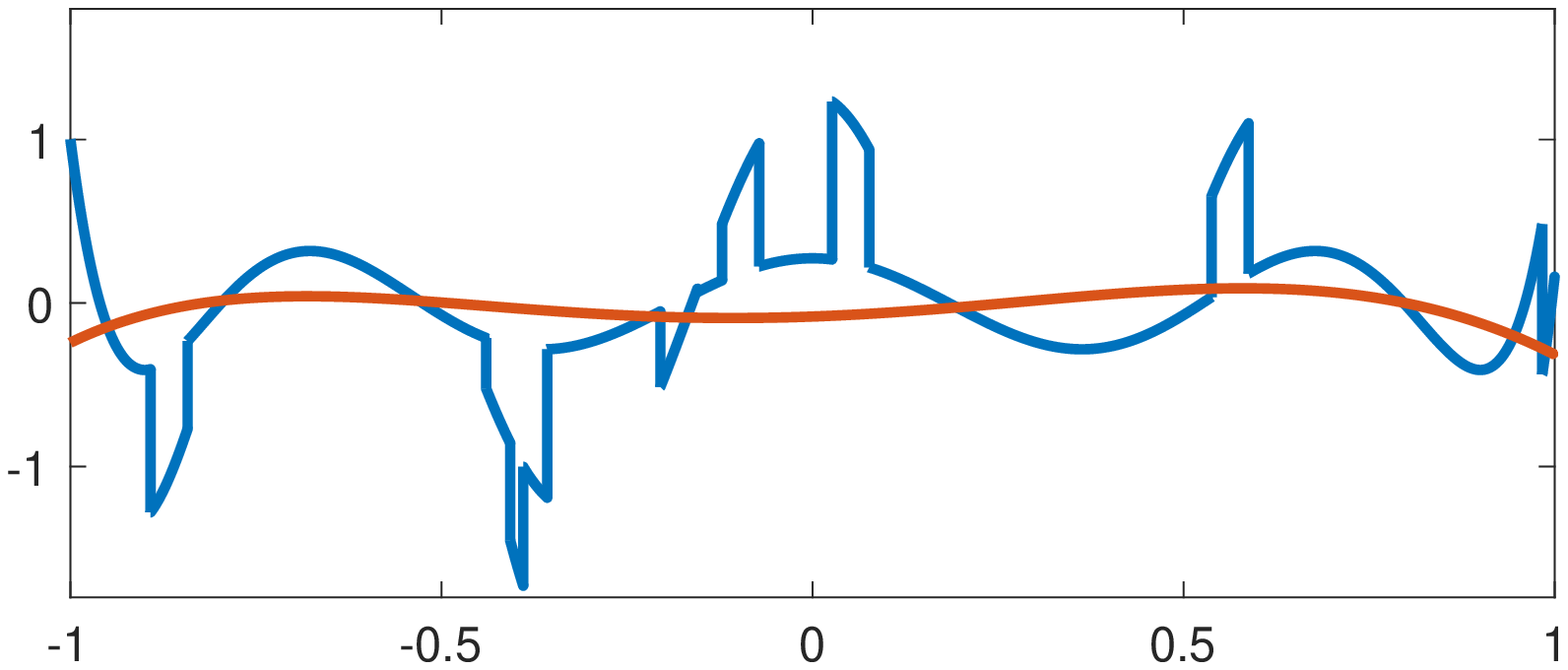} 
\put(45,30.5) {$n = 5$}
\put(3,30.5) {(a)}
\put(50,-3) {$x$}
\end{overpic}

\vspace{.2cm}

 \begin{overpic}[width=\textwidth]{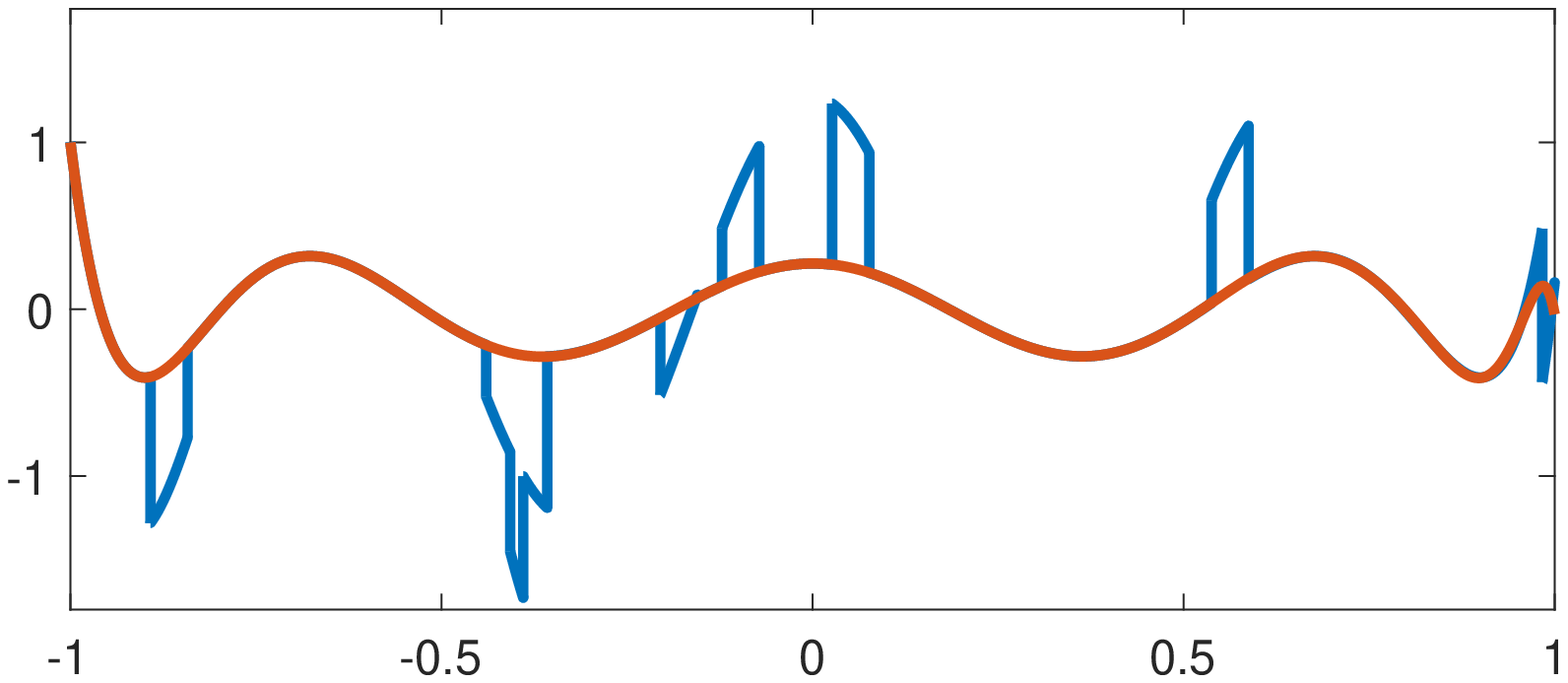}
\put(45,31) {$n = 20$}
\put(3,30.5) {(c)}
\put(50,-3) {$x$} 
\end{overpic}
  
\vspace{.2cm}

\end{minipage} 
\begin{minipage}{.49\textwidth}
 \begin{overpic}[width=\textwidth]{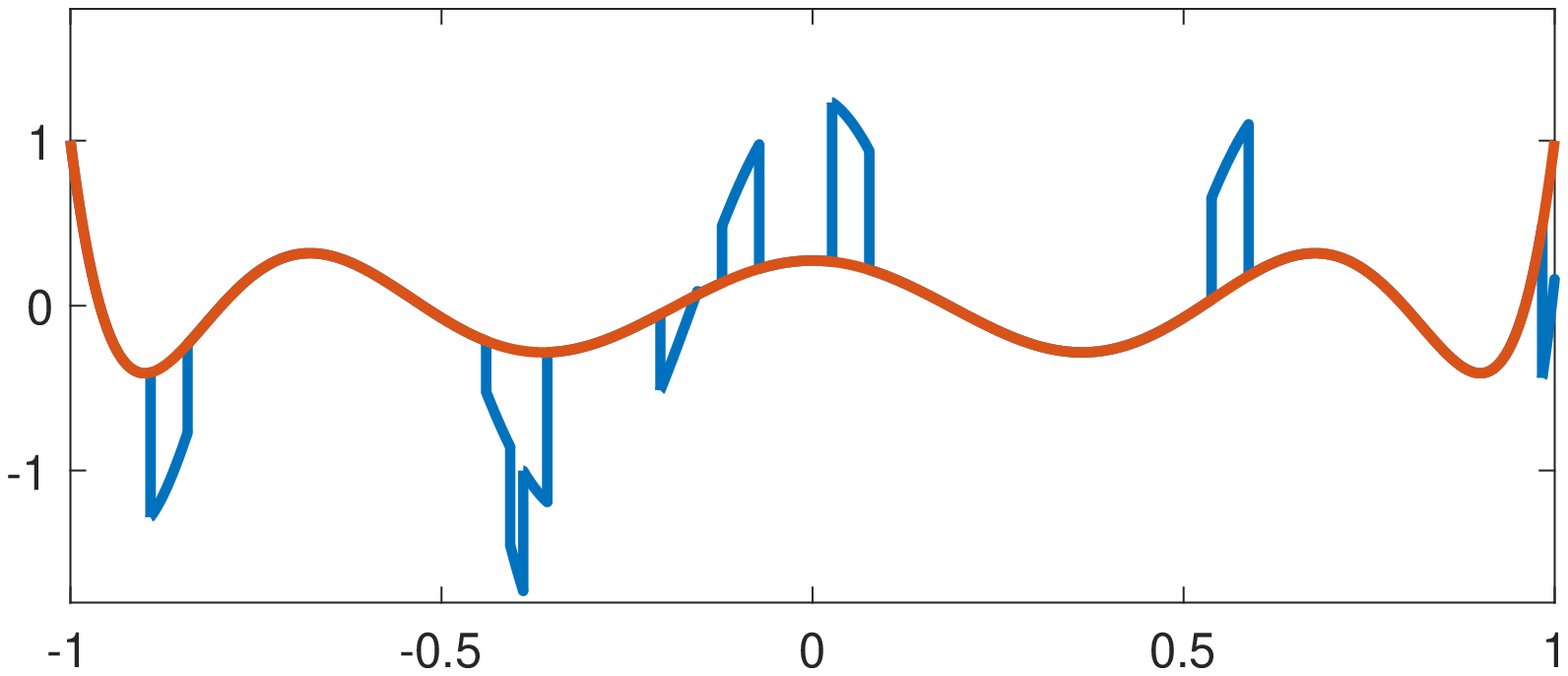} 
 \put(45,31) {$n = 10$}
 \put(3,30.5) {(b)}
 \put(50,-3) {$x$}
 \end{overpic}
 
\vspace{.2cm}

 \begin{overpic}[width=\textwidth]{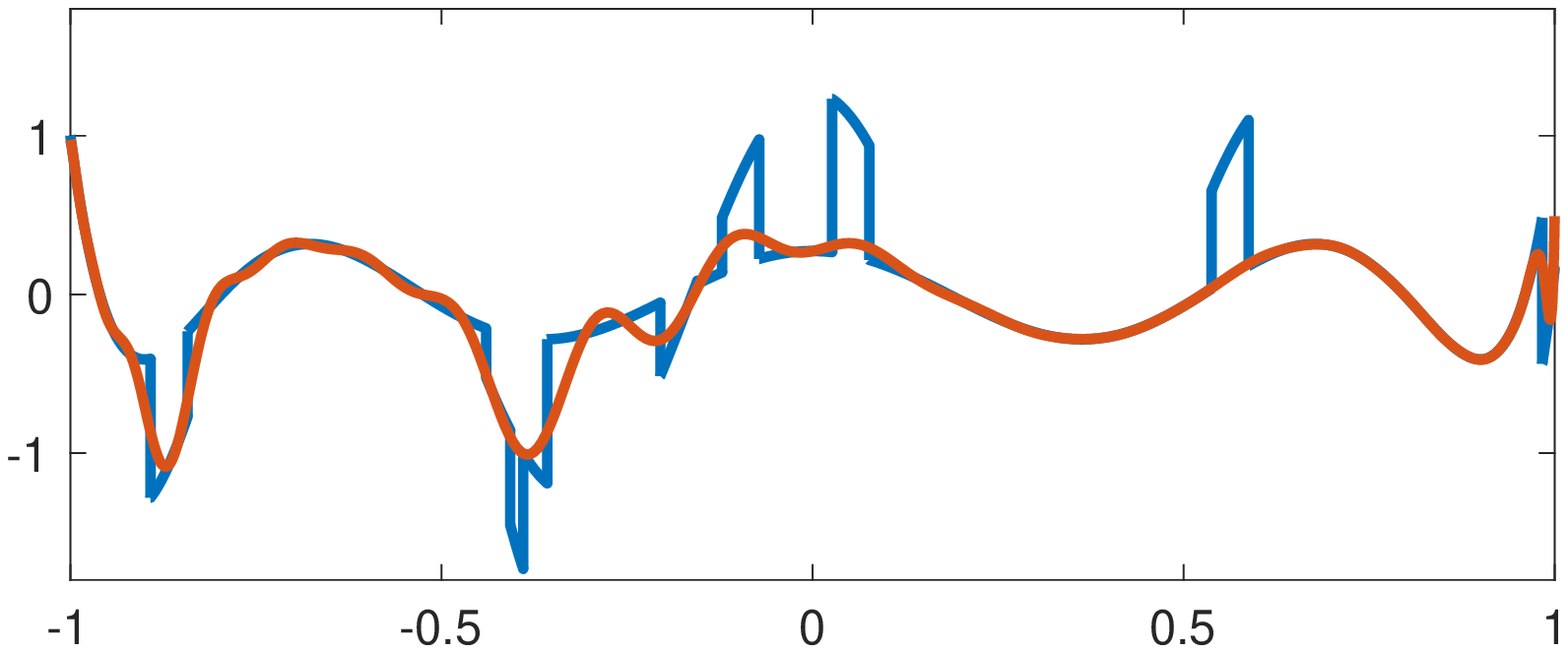} 
 \put(45,29.5) {$n = 40$}
 \put(3,30.5) {(d)}
 \put(50,-3) {$x$}
 \end{overpic}
 
\vspace{.2cm}

\end{minipage} 
 \caption{Best $L_1$ polynomial approximants of degree $n = 5$ (see (a)), $n=10$ (see (b)), $n=20$ (see (c)), and $n=40$ (see (d)) to a $s$-corrupted Legendre polynomial of degree $8$ with $s \approx 0.349$. For this example, we find the following: When $n<8$, $\bestLone$ does not recover the polynomial before it was corrupted (see (a)). When $8\leq n\leq 15$, $\bestLone$ perfectly recovers the polynomial before it was corrupted (see (b)). When $16\leq n\leq 26$, $\bestLone$ tries to fit corruptions near $\pm 1$ but not corruptions away from $\pm 1$ (see (c)). When $n>27$, $\bestLone$ tries to fit corruptions away from $\pm 1$ too (see (d)).}
 \label{fig:CorruptedFunction} 
\end{figure}
\Cref{fig:CorruptedFunction} illustrates the four regimes that one typically observes with best $L_1$ approximants of degree $\leq n$ of $f = p_m + \omega$: (a) If $n<m$, then $\bestLone\neq p_m$, but $\bestLone$ is a near-best approximant to $p_m$ (see~\cref{sec:L0L1smooth}), (b) If $n$ is small and $n\geq m$, then one gets exact recovery as $\bestLone = p_m$ (see~\cref{cor:L1L0condition}), (c) If $n$ is a little larger, then $\bestLone$ tries to fit corruptions near $\pm 1$ but not the corruptions away from $\pm 1$, and (d) When $n$ is large, $\bestLone$ tries to fit all the corruption, resulting in an overfit. 

We go on to derive an efficient algorithm for the recovery of $p_m$ from $f$ by showing that the continuous optimization problem in~\cref{eq:L1problem} for $\bestLone$ can be reduced to a linear programming problem, provided that a sampling condition is satisfied (see~\cref{thm:MainTheorem}). This observation results in a computationally efficient algorithm for the exact recovery of corrupted polynomials (see~\cref{sec:ell1ell0}).  

It is worth emphasizing that the Lebesgue measure of the support of the corruption must be extremely small. For example, our theory only guarantees that a corrupted polynomial of degree $100$ can be exactly recovered if it is corrupted on a set of measure $\leq 2.5\times 10^{-5}$. Nevertheless, in practice, we observe that exact recovery is usually still possible when the corruption occurs on sets that have a much larger measure. Moreover, the distribution of the corruption in $[-1,1]$ does matter.  In particular, larger regions of corruption are allowed away from $\pm1$ and we present an initial result in this direction (see~\cref{thm:HoleInTheMiddle}). For example, when $n = 100$ exact recovery is still guaranteed with any corruption interval of the form $[-s/2,s/2]$ with $s\leq 4\times 10^{-4}$.

The error localization properties of best $L_1$ approximants lead to an iterative algorithm for computing $\bestLone$ given a continuous function $f:[-1,1]\rightarrow \mathbb{R}$, based on a combination of linear programming and Newton's method (see~\cref{sec:alg}). This can be seen as an improvement on Watson's algorithm~\cite{Glashoff79,watson1981algorithm}. Our algorithm allows for the zero set of $f - \bestLone$ to have positive measure and heavily employs algorithmic advances over the last decade in polynomial rootfinding and adaptive Chebyshev interpolants~\cite{battles2004extension,pachon2010piecewise}. In particular, our implementation greatly benefits from the adaptive and robust algorithms for computing with functions in Chebfun.\footnote{Chebfun is an object-oriented software system written in MATLAB that provides an environment to compute with piecewise smooth functions~\cite{pachon2010piecewise}. It represents univariate functions defined on a finite interval by piecewise Chebyshev interpolants of adaptively selected degrees that are accurate to essentially machine precision~\cite{driscoll2014chebfun}.} It is able to accurately compute best $L_1$ approximants of degrees in the thousands (see~\cref{sec:alg}).

In addition to the $L_1$-norm (see~\cref{eq:L1problem}), we also {\RefTwo{define the following for}} continuous functions $f:[-1,1] \rightarrow \mathbb{R}$: 
\begin{equation}
  \label{eq:normdefs0}
 \begin{aligned}
  \| f \|_{\infty} &= \max_{x\in[-1,1]} |f(x)|, \qquad & \| f \|_{\ell_1} &= \sum_{j=0}^N w_j\left|f(x_j)\right|, \\
  \| f \|_{2} &= \RefTwo{\sqrt{\int_{-1}^1 f(x)^2dx}}, &   \| f \|_{\ell_0} &= \#\left\{ j : \left|f(x_j)\right| > 0, \quad 0\leq j\leq N \right\},    
 \end{aligned}  
\end{equation}
where $w_j\geq 0$ are weights so that $\sum_{j=0}^N w_j\left|f(x_j)\right|\rightarrow \int_{-1}^1|f(x)|dx$ as $N\rightarrow\infty$. {\RefTwo{Despite the notation, $\| \cdot \|_{\ell_0}$ is not a norm. For completeness, we also define $\|f\|_{0} = |{\rm supp}(f)|$ as the Lebesgue measure of the support of $f$.}} We always take $x_0,\ldots, x_N$ in the discrete norms $\| f \|_{\ell_1}$ and $\| f \|_{\ell_0}$ to be the roots of the degree $N+1$ Chebyshev polynomial of the second kind $U_{N+1}$~\cite[Tab.~18.3.1]{dlmf}. That is,  
\begin{equation}
x_j = \cos\!\left(\frac{(N+1-j)\pi}{N+2}\right), \qquad 0\leq j\leq N. 
\label{eq:ChebyshevGrid} 
\end{equation} 
Accordingly, we take $\smash{w_j=\pi \sqrt{1-x_j^2}/(N+2)}$ in~\cref{eq:normdefs0} so that the corresponding quadrature rule is related to the Gauss--Chebyshev rule.  The Chebyshev polynomials of the second kind and their roots in~\cref{eq:ChebyshevGrid} play a special role in best $L_1$ approximation~\cite[Ch.~14]{powell1981approximation}. In particular, when $N = n$, the polynomial interpolant of $f$ at the points in~\cref{eq:ChebyshevGrid}, i.e., 
\begin{equation} 
\pcheb(x) = \sum_{j=0}^n f(x_j)\ell_j(x), \qquad \ell_j(x) = \frac{\prod_{i=0,i\neq j}^n (x-x_i)}{\prod_{i=0,i\neq j}^n (x_j-x_i)},
\label{eq:ChebyshevInterpolant} 
\end{equation} 
is the best $L_1$ polynomial approximation of degree $\leq n$ to $f$ if $f-\pcheb$ has exactly $n+1$ distinct zeros in $[-1,1]$~\cite{brass1988remark,pinkus1989}.

For an integer $n\geq 0$, we denote by $\bestLinf$, $\smash{p_n^{L_2}}$, $\bestlone$, and $\bestlzero$ any best $L_\infty$, $L_2$, $\ell_1$, and $\ell_0$ polynomial of degree $\leq n$ to $f$, respectively.  These polynomials are solutions to the following optimization problems: 
\begin{equation} 
\begin{aligned} 
 \bestLinf &= \arg\min_{q\in\mathcal{P}_n} \|f - q\|_{\infty}, \qquad &\bestlone = \arg\min_{q\in\mathcal{P}_n} \|f - q\|_{\ell_1}, \cr
 p_n^{L_2} &= \arg\min_{q\in\mathcal{P}_n}\|f - q\|_{2}, \qquad &\bestlzero = \arg\min_{q\in\mathcal{P}_n} \|f - q\|_{\ell_0}.\cr
 \end{aligned}
 \label{eq:polynomialApproximants}
\end{equation}
{\RefTwo We also define $p_n^{L_0} = \arg\min_{q\in\mathcal{P}_n}\|f - q\|_{0}$, when best polynomial in this sense exists.}

The paper is structured as follows. In~\cref{sec:L0L1}, we show that the exact recovery of an arbitrarily corrupted polynomial is possible provided that the support of the corruption has small enough measure. This leads to an efficient algorithm to achieve recovery. In~\cref{sec:L0L1smooth}, we extend these ideas to the near-recovery of corrupted smooth functions. In~\cref{sec:theory}, we show that $|\Omega_n|$ is small precisely when $\|f - \bestLone\|_{1}\rightarrow 0$ faster than $\|f-\bestLinf\|_{\infty}\rightarrow 0$ as $n\rightarrow\infty$ and carefully consider two worked examples with error localization. Finally, in~\cref{sec:alg}, we present our iterative algorithm for computing best $L_1$ polynomial approximants of continuous functions.

\section{Exact recovery of corrupted polynomials}\label{sec:L0L1}
In this section we suppose that $f:[-1,1]\rightarrow \mathbb{R}$ is formed by an arbitrarily corrupted polynomial, i.e., $f = p_m + \omega$, where $p_m$ is a polynomial of degree $\leq m$ and $\omega$ is a function with small support. We investigate the question: When is it possible to exactly recover $p_m$ from knowledge of $f$? 

We show that for corrupted polynomials, we have $\bestLzero = \bestlzero = \bestlone= \bestLone = p_m$ provided that the support of $\omega$ is sufficiently small, $n\geq m$, and enough of the samples $x_0,\ldots, x_N$ in~\cref{eq:ChebyshevGrid} lie outside of the support of $\omega$. 
\begin{theorem} 
Let  $f=p_m+\omega$ be a $s$-corrupted polynomial of degree $\leq m$. Then, the following statements hold when $n\geq m$: 
\begin{enumerate} 
\item If $s<1$, then $\bestLzero  = p_m$.
\item If $(N-n)/2${\RefTwo, or fewer,} of the samples $x_0,\ldots, x_N$ are in ${\rm supp}(\omega)$, then $\bestlzero=p_m$.
\item If $k$ of $x_0,\ldots, x_N$ are in ${\rm supp}(\omega)$, $N+1>{\RefTwo 6}(n+1)k-1$, and $N\geq n$, then $\bestlone=p_m$. 
\item If $s <1/(n+1)^2$, then $\bestLone  = p_m$.
\end{enumerate} 
\label{thm:MainTheorem}
\end{theorem}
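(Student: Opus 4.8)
The plan is to prove all four claims from one template: show that $p_m$ \emph{strictly} beats every competitor $q\in\mathcal{P}_n$ in the relevant quantity, which simultaneously yields that the minimizer equals $p_m$ and that it is unique. Set $h=p_m-q\in\mathcal{P}_n\setminus\{0\}$. Since $f-q=h+\omega$ and $\omega$ vanishes off $\mathrm{supp}(\omega)$, the reverse triangle inequality $|h+\omega|\ge|\omega|-|h|$ applied on $\mathrm{supp}(\omega)$ reduces each claim to: the mass of $h$ on $\mathrm{supp}(\omega)$ (or on the corrupted samples) is strictly less than half the total mass of $h$. Claims (1) and (2) then follow at once from the fact that a nonzero $h\in\mathcal{P}_n$ has at most $n$ zeros, so $\{h\neq 0\}$ has measure $2$ and contains at least $N+1-n$ of the nodes $x_0,\dots,x_N$: this gives $\|f-q\|_{0}\ge 2-s>s\ge\|f-p_m\|_{0}$ when $s<1$, and $\|f-q\|_{\ell_0}\ge N+1-n-k>k\ge\|f-p_m\|_{\ell_0}$ when at most $(N-n)/2$ samples lie in $\mathrm{supp}(\omega)$.

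Claim (3) is the substantive one. After the reduction it reads $2\sum_{j\in S}w_j|h(x_j)|<\|h\|_{\ell_1}$ for every nonzero $h\in\mathcal{P}_n$, where $S$ indexes the $k$ corrupted samples; bounding $\sum_{j\in S}\le k\max_j$, it suffices to establish the discrete Nikolskii-type inequality
\[
  \max_{0\le j\le N} w_j|h(x_j)| \;\le\; \frac{3(n+1)}{N+2}\,\|h\|_{\ell_1},
\]
since then $2k\cdot\tfrac{3(n+1)}{N+2}<1$ is precisely the hypothesis $N+1>6(n+1)k-1$. To prove the displayed inequality I substitute $x=\cos\theta$: $T(\theta):=\sqrt{1-x^2}\,h(\cos\theta)$ is an odd trigonometric polynomial of degree $\le n+1$, the nodes become equally spaced in $(0,\pi)$, and $w_j|h(x_j)|=\tfrac{\pi}{N+2}|T(\theta_j)|$. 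Then I combine three ingredients: (a) the $L^\infty$--$L^2$ Nikolskii bound $\|T\|_\infty^2\le(2n+3)\cdot\tfrac{1}{2\pi}\int_0^{2\pi}T(\theta)^2\,d\theta$, which is Cauchy--Schwarz over the at most $2n+3$ Fourier coefficients followed by Parseval; (b) exactness of the $2(N+2)$-point trapezoidal rule on $T^2$ (valid since $\deg T^2\le 2n+2\le 2N+3$ because $N\ge n$), which---using that $T$ is odd and vanishes at $0,\pi$, so the $2(N+2)$ equispaced nodes fold onto the $x_j$---turns $\tfrac{1}{2\pi}\int_0^{2\pi}T^2$ into $\tfrac1{N+2}\sum_j|T(\theta_j)|^2$; and (c) the trivial estimate $\sum_j|T(\theta_j)|^2\le\|T\|_\infty\sum_j|T(\theta_j)|$. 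Chaining (a)--(c), cancelling one factor $\max_j|T(\theta_j)|$ (nonzero because a nonzero polynomial of degree $\le n$ cannot vanish at all $N+1>n$ nodes), and absorbing $2n+3\le3(n+1)$ gives the claim.

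For (4) the same reduction shows it is enough to prove $\int_E|h|<\tfrac12\|h\|_1$ for every measurable $E\subseteq[-1,1]$ with $|E|\le s<1/(n+1)^2$ and every nonzero $h\in\mathcal{P}_n$. Since $\int_E|h|\le s\|h\|_\infty$, this follows from the sharp Nikolskii inequality $\|h\|_\infty\le\tfrac{(n+1)^2}{2}\|h\|_1$, which I will obtain from the Legendre reproducing kernel $K_n(x,y)=\sum_{j=0}^n\tfrac{2j+1}{2}P_j(x)P_j(y)$ of $\mathcal{P}_n$ for the Lebesgue inner product: the identity $h(y)=\int_{-1}^1h(x)K_n(x,y)\,dx$ gives $\|h\|_\infty\le\|h\|_1\max_{x,y}|K_n(x,y)|$, and Cauchy--Schwarz together with $|P_j|\le1$ and $P_j(1)=1$ shows $\max_{x,y}|K_n(x,y)|=K_n(1,1)=\tfrac12\sum_{j=0}^n(2j+1)=\tfrac{(n+1)^2}{2}$.

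The hard part is the discrete Nikolskii bound in (3): the estimate must be linear in $n$ (a crude Bernstein-based argument only yields $n^2$, and hence a much weaker threshold) and the constants must close up to give exactly $N+1>6(n+1)k-1$. Step (c)---returning from the discrete $\ell_2$ quadrature identity to $\ell_1$ by dividing out the largest nodal value---is the key device that keeps the estimate sharp; the rest (checking strictness everywhere, which gives uniqueness, and the degenerate cases $h\equiv0$ and $w_j>0$) is routine.
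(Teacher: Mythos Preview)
Your proof is correct. Parts (1), (2), and (4) follow the same route as the paper; the only addition is that for (4) you supply a self-contained proof of the key inequality $\int_{\Omega_s}|p|\le \tfrac{s(n+1)^2}{2}\|p\|_1$ via the Legendre reproducing kernel, whereas the paper simply cites it as an exercise in the literature.

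Part (3) is where you take a genuinely different path. The paper recasts the discrete $\ell_1$ problem as basis pursuit, proves that a certain matrix $V^\top$ built from second-kind Chebyshev polynomials satisfies the restricted isometry property with constant $\delta_k=2(n+1)k/(N+2)$, and then invokes the black-box theorem of Cai--Zhang that $\delta_k<\tfrac{1}{3}$ forces the $\ell_1$ and $\ell_0$ minimizers to coincide. You instead argue directly: after the triangle-inequality reduction to $2\sum_{j\in S}w_j|h(x_j)|<\|h\|_{\ell_1}$, you establish the discrete Nikolskii bound $\max_j w_j|h(x_j)|\le\tfrac{3(n+1)}{N+2}\|h\|_{\ell_1}$ by passing to the sine transform $T(\theta)=\sin\theta\,h(\cos\theta)$, using exactness of the $2(N+2)$-point trapezoidal rule on $T^2$, and the $\ell_\infty$--$\ell_2$--$\ell_1$ cancellation trick. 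Your route is entirely self-contained (no external RIP recovery theorem), and it makes the origin of the constant $6$ transparent; indeed, exploiting that $T$ is odd (hence has only $n+1$ Fourier modes, not $2n+3$) would sharpen your $3(n+1)$ to $2(n+1)$ and improve the threshold to $N+2>4(n+1)k$. The paper's approach, by contrast, situates the result within the compressed-sensing framework and points toward generalizations to other sampling matrices.
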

We prove the four statements in the theorem, in turn, in the next four subsections. 

\subsection{Exact recovery with best $\mathbf{L_0}$ approximation}\label{sec:ExactRecoveryL0} 
Intuitively, recovery of a corrupted function is ideal for best $L_0$ polynomial approximation as the Lebesgue measure of ${\rm supp}(f-\bestLzero)$ is minimized. The polynomial approximant $\bestLzero$ is so good at recovery that when $f = p_m+ \omega$ we have $\bestLzero=p_m$ provided that ${\rm supp}(\omega)$ is less than half the interval and $n\geq m$. 

To see this, note that $|{\rm supp}(f - p_m)| = {\rm supp}(\omega) = s<1$. Suppose there is a polynomial $q$ of degree $\leq n$ such that $|{\rm supp}(f - q)| \leq  |{\rm supp}(f - p_m)|$. Then, $|{\rm supp}(q - p_m)| \leq 2s < 2$ so $q$ and $p_m$ must coincide on a set of positive measure in $[-1,1]$. Since $q$ and $p_m$ are polynomials and $n\geq m$, we have that $q = p_m$. We conclude that $\bestLzero=p_m$ provided that ${\rm supp}(\omega)<1$ and $n\geq m$. This proves the first statement of~\cref{thm:MainTheorem}. 

\subsection{Exact recovery with best $\mathbf{\ell_0}$ approximation}\label{sec:L0ell0}
It can be algorithmically challenging to compute $\bestLzero$ and it is reasonable to attempt recovery from $\bestlzero$ instead, which involves a discrete optimization problem. The polynomial approximant $\bestlzero$ is also ideal at recovering polynomials under the mild assumption that enough of the samples $x_0,\ldots,x_N$ (see~\cref{eq:ChebyshevGrid}) lie outside of ${\rm supp}(\omega)$. 

To see this, suppose that $f = p_m + \omega$ and there is a polynomial $q$ of degree $\leq n$ such that 
\begin{equation} 
 \|f - q\|_{\ell_0} \leq \|f - p_m\|_{\ell_0} = k,  
\label{eq:ksparse} 
\end{equation} 
where $n\geq m$ and $k$ is the number of samples $x_0,\ldots, x_N$ in ${\rm supp}(\omega)$. If $k\leq (N-n)/2$, then $q-p_m\in\mathcal{P}_n$ is zero on at least $N+1-2k \geq n+1$ distinct points and hence $q=p_m$~\cite[p.~34]{powell1981approximation}. By definition of $\bestlzero$, we must have $\bestlzero = p_m$. This proves the second statement of~\cref{thm:MainTheorem}. 

\subsection{Exact recovery with best $\mathbf{\ell_1}$ approximation}\label{sec:ell1ell0}
The polynomial $\bestlzero$ can be computationally prohibitive to compute if $m$ is large.  Fortunately, by using the restricted isometry property (RIP) from compressed sensing, one finds that $\bestlzero=\bestlone$ when an oversampling condition is satisfied, along with some
regularity assumptions. This means that $\bestlone$, which can be computed efficiently, can often be used for exact recovery~\cite{mosek}. 

First, we know that $\|f-q_n\|_{\ell_0}=k$ for $q_n\in\mathcal{P}_n$ is equivalent to a vector $\underline{y}$ having precisely $k$ nonzero entries, where
\begin{equation}
  \label{eq:phimat}
 \underline{y} = \underbrace{\begin{bmatrix}U_0(x_0) \!\!&\!\! \cdots\!\! &\!\! U_n(x_0)\\[5pt] U_0(x_1) \!\!&\!\! \cdots\!\! &\!\! U_n(x_1)\\[5pt] \vdots\!\! &\!\! \ddots\!\! &\!\! \vdots \\[5pt] U_0(x_N)\!\! & \!\!\ldots\!\! & \!\!U_n(x_N) \end{bmatrix}}_{=\Phi}\!\!\!\begin{bmatrix}c_0\\[5pt]\vdots \\[5pt] c_n \end{bmatrix} - \begin{bmatrix}f(x_0)\\[5pt] f(x_1)\\[5pt]\vdots\\[5pt] f(x_N) \end{bmatrix}, \qquad q_n(x) = \sum_{i=0}^n c_iU_i(x)
\end{equation}
and $U_i(x)$ is the Chebyshev polynomial of the second kind of degree $i$~\cite[Tab.~18.3.1]{dlmf}. 
The problem of minimizing $\|\underline{y}\|_{\ell_0}$ over $\mathcal{P}_n$ in~\cref{eq:phimat} is solved by $\bestlzero$ and can be written as 
\begin{equation}
  \label{eq:ell0Phi}
\minimize_{\underline{c}\in\mathbb{R}^{n+1}}\|\Phi\underline{c}-\underline{f}\|_{\ell_0},\qquad \underline{f} = \begin{bmatrix}f(x_0)& \cdots & f(x_N) \end{bmatrix}^\top, 
\end{equation} 
which is equivalent to the following diagonally-scaled problem: 
\begin{equation}
  \label{eq:ell0PhiD}
\minimize_{\underline{c}\in\mathbb{R}^{n+1}}\|D\Phi \underline{c}-D\underline{f}\|_{\ell_0},\quad D = \sqrt{2/(N+2)}\,{\rm diag}\!\left(\sqrt{1-x_0^2},\ldots,\sqrt{1-x_N^2}\right). 
\end{equation} 

By a technique described in~\cite[p.~4204]{candes2005decoding}, if {\RefTwo $V\in\mathbb{R}^{(N+1)\times(N-n)}$} is a matrix whose columns form a basis for the left null space of $D\Phi$ so that $V^\top (D\Phi) = 0$, then~\cref{eq:ell0Phi} is also a constrained $\ell_0$ minimization problem:
\begin{equation}
  \label{eq:ymineq}
 \minimize_{\underline{z}\in\mathbb{R}^{N+1}} \|\underline{z}\|_{\ell_0},\qquad \mbox{subject to}\quad V^\top \underline{z} = -V^\top D\underline{f},
\end{equation}
where $\underline{z}=D\Phi\underline{c}-D\underline{f}$.  This problem is precisely the task of interest in the compressed sensing literature with a short-fat matrix $V^\top$ and an unknown sparse vector $\underline{z}$.

{\RefTwo The $\ell_0$ minimization problem~\eqref{eq:ymineq} is known to be NP-hard~\cite[Sec. 2.3]{foucart2013mathematical}. A practical remedy is to replace the $\ell_0$ norm with the $\ell_1$ norm. To understand when this gives the solution to the $\ell_0$ problem,}
an important concept in compressed sensing is the RIP. We say that a matrix $A\in\mathbb{C}^{m\times r}$ satisfies the RIP if there exists a constant $0<\delta_k<1$ such that 
\begin{equation} 
(1-\delta_k)\|\underline{x}\|_2^2 \leq \| A\underline{x}\|_2^2 \leq (1+\delta_k)\|\underline{x}\|_2^2,\qquad \|\underline{x}\|_2^2 = \sum_{i=1}^r |x_i|^2, 
\label{eq:RIP} 
\end{equation} 
for every vector $\underline{x}\in\mathbb{C}^r$ that has at most $k$ nonzero entries~\cite{candes2005decoding}.  It is known that if $V^\top$ satisfies the RIP with {\RefTwo $\delta_{k}<\frac{1}{3}$}, then the solution to~\cref{eq:ymineq} is exactly recovered (under the assumption that the $\ell_0$-minimizer $\leq k$ nonzero entries) by solving the $\ell_1$ minimization problem~\cite{cai2013sharp}
\begin{equation}\label{eq:LPy}
 \minimize_{\underline{z}\in\mathbb{R}^{N+1}} \|\underline{  z}\|_{\ell_1},\qquad \mbox{subject to}\quad V^\top \underline{ z} = -V^\top D\underline{f}.
\end{equation}
{\RefTwo Here,~\cref{eq:LPy} can be efficiently solved as a basis pursuit problem via the spectral projected-gradient $L_1$ (SPGL1) algorithm~\cite{BergFriedlander:2008}; especially, since there is a fast matrix-vector product for $V^\top$ based on the discrete sine transformation (see~\cref{eq:V}).} 

Note that unlike $\| f \|_{\ell_1}$ in~\cref{eq:normdefs0} for functions, the $\ell_1$ norm for vectors is simply the sum of the absolute values of the vector entries. The problem in~\cref{eq:LPy} is equivalent to 
\begin{equation}
  \label{eq:LP_solvethis}
 \minimize_{\underline{c}\in\mathbb{R}^{n+1}} \| D(\Phi \underline{c}-\underline{f})\|_{\ell_1}, 
\end{equation}
which in turn can be written as (recalling~\cref{eq:normdefs0}) the best $\ell_1$ approximation problem: 
\begin{equation}  \label{eq:discreteell1}
\minimize_{q\in\mathcal{P}_n} \|f - q\|_{\ell_1}. 
\end{equation}
%Using a standard technique (see~\cite[Ch.~15]{powell1981approximation}),~\cref{eq:discreteell1} can be turned into the following linear programming (LP) problem:
%{\RefTwo \begin{equation}\label{eq:ell1LP}
 % \begin{split}
%&\minimize_{u_0,\ldots,u_N, c_0,\ldots,c_n}\quad  \sum_{i=0}^N w_i u_i,\\    
%&\subjectto \quad u_i\geq 0, \quad -u_i\leq f(y_i)-\sum_{j=0}^n c_jU_j(y_i) \leq u_i,\quad 0\leq i\leq N,
 % \end{split}
%\end{equation}}
%which can be efficiently solved~\cite{mosek}. 

We conclude that if the matrix $V^\top$ satisfies the RIP with {\RefTwo $\delta_{k}<\frac{1}{3}$} then we have $\bestlone=\bestlzero$, where
\[
\bestlone(x) = \sum_{j=0}^{n} c_j^* U_j(x)
\]
and the vector $\underline{c}^*$ is the solution to~\cref{eq:LP_solvethis}.

We are left with the task of studying when the matrix $V^\top$ in~\cref{eq:ymineq} satisfies the RIP with {\RefTwo $\delta_{k}<\frac{1}{3}$}. For the samples $x_0,\ldots,x_N$ that are given in~\cref{eq:ChebyshevGrid}, we have the discrete orthogonality condition $\smash{\sum_{\ell=0}^N U_{i}(x_\ell)U_j(x_\ell) (1-x_\ell^2)= 0}$ for $i\neq j$~\cite[Sec.~4.6.1]{mason2002chebyshev} so that we can write down an explicit basis for the left null space of $D\Phi$ in~\cref{eq:phimat}. That is,  
\begin{equation} 
 V = D \!\begin{bmatrix}U_{n+1}(x_0) & \ldots & U_{N}(x_0) \cr \vdots & \ddots & \vdots \cr U_{n+1}(x_N) & \ldots & U_{N}(x_N) \end{bmatrix} \in\mathbb{R}^{(N+1)\times(N-n)}. 
\label{eq:V}
\end{equation} 
It turns out that due to the choice of the diagonal matrix $D$ in~\cref{eq:ell0PhiD}, the matrix $V$ in~\cref{eq:V} is formed from a subset of columns of an orthogonal matrix. Furthermore, the size of $V^\top$ need not be extremely short-fat, as often required in compressed sensing.
It is therefore possible to show that $V^\top$ satisfies the RIP under a mild oversampling condition. 
\begin{proposition}\label{prop:Vtranspose}
If $N+2> 2(n+1)k$ for some integer $k\geq 1$, then $V^\top$ in~\cref{eq:V} satisfies the RIP with $\delta_k= (2(n+1)/(N+2))k$.
\end{proposition}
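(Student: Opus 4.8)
The plan is to exploit the fact that, because the nodes in~\cref{eq:ChebyshevGrid} satisfy the discrete orthogonality relation $\sum_{\ell=0}^N U_i(x_\ell)U_j(x_\ell)(1-x_\ell^2)=\tfrac{N+2}{2}\delta_{ij}$ for $0\le i,j\le N$, the particular choice of $D$ in~\cref{eq:ell0PhiD} makes the $(N+1)\times(N+1)$ matrix $[\,D\Phi \mid V\,]$ orthogonal. In particular, both $V$ and $D\Phi$ have orthonormal columns and $VV^\top = I_{N+1}-W$, where $W=(D\Phi)(D\Phi)^\top$ is the orthogonal projection onto the $(n+1)$-dimensional column space of $D\Phi$. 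Hence, for every $\underline{x}\in\mathbb{R}^{N+1}$, one has the exact identity $\|V^\top\underline{x}\|_2^2 = \|\underline{x}\|_2^2 - \|W\underline{x}\|_2^2$. The upper inequality in the RIP~\cref{eq:RIP} is then immediate because $\|W\underline{x}\|_2^2\ge 0$ (so even $\|V^\top\underline{x}\|_2^2\le\|\underline{x}\|_2^2$ holds); the substance of the proposition is the lower inequality, which is equivalent to $\|W\underline{x}\|_2^2\le \delta_k\|\underline{x}\|_2^2$ for every vector $\underline{x}$ with at most $k$ nonzero entries.

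To bound $\|W\underline{x}\|_2$, I would take $\underline{x}$ supported on a set $T\subseteq\{0,\dots,N\}$ with $|T|\le k$ and use that $D\Phi$ has orthonormal columns to write $\|W\underline{x}\|_2 = \|(D\Phi)^\top\underline{x}\|_2 = \|R_T^\top\,\underline{x}|_T\|_2$, where $R_T\in\mathbb{R}^{k\times(n+1)}$ is the submatrix formed by the rows of $D\Phi$ indexed by $T$. Therefore $\|W\underline{x}\|_2^2 \le \sigma_{\max}(R_T)^2\,\|\underline{x}\|_2^2 \le {\rm trace}(R_TR_T^\top)\,\|\underline{x}\|_2^2 = \big(\sum_{\ell\in T}\|r_\ell\|_2^2\big)\|\underline{x}\|_2^2$, where $r_\ell$ is the $\ell$th row of $D\Phi$ and we used that the spectral norm of a positive semidefinite matrix is at most its trace. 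The final ingredient is a uniform bound on the row norms: writing $x_\ell=\cos\theta_\ell$ (and noting every node in~\cref{eq:ChebyshevGrid} lies in the open interval, so $\sin\theta_\ell\neq 0$) and using $U_j(\cos\theta)\sin\theta=\sin((j+1)\theta)$, one gets $\|r_\ell\|_2^2=\tfrac{2}{N+2}(1-x_\ell^2)\sum_{j=0}^n U_j(x_\ell)^2=\tfrac{2}{N+2}\sum_{j=0}^n\sin^2((j+1)\theta_\ell)\le \tfrac{2(n+1)}{N+2}$. Summing over the at most $k$ indices in $T$ yields $\|W\underline{x}\|_2^2\le \tfrac{2(n+1)k}{N+2}\|\underline{x}\|_2^2$, so $\delta_k=\tfrac{2(n+1)}{N+2}k$ works; the hypothesis $N+2>2(n+1)k$ is precisely what makes $\delta_k<1$, while $\delta_k>0$ is clear, and the same $\delta_k$ serves all sparsity levels $\le k$ since the bound is monotone in $|T|$.

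I expect the only real obstacle to be the structural bookkeeping in the first paragraph, namely recognizing that the discrete orthogonality relation implies $VV^\top=I_{N+1}-W$ (equivalently, that the columns of $V$ complete those of $D\Phi$ to an orthonormal basis of $\mathbb{R}^{N+1}$), which is what converts the RIP for $V^\top$ into a question about a projection. After that, the reduction to $\|(D\Phi)^\top\underline{x}\|_2$, the trace bound, and the $\sin$-identity row-norm computation are routine, and they produce exactly the claimed constant without any slack to optimize.
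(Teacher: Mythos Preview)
Your proposal is correct and follows essentially the same route as the paper: both arguments use the discrete orthogonality of the $U_j$ at the nodes~\cref{eq:ChebyshevGrid} to observe that $[\,D\Phi\mid V\,]$ is orthogonal, deduce the Pythagorean identity $\|V^\top\underline{z}\|_2^2=\|\underline{z}\|_2^2-\|(D\Phi)^\top\underline{z}\|_2^2$, and then bound $\|(D\Phi)^\top\underline{z}\|_2^2$ for $k$-sparse $\underline{z}$ via the entrywise bound $\sqrt{1-x_\ell^2}\,|U_j(x_\ell)|=|\sin((j+1)\theta_\ell)|\le 1$. The only cosmetic difference is in that last bounding step: the paper applies Cauchy--Schwarz to each coordinate of $(D\Phi)^\top\underline{z}$ and then sums over the $n+1$ coordinates, whereas you pass through $\sigma_{\max}(R_T)^2\le\|R_T\|_F^2=\sum_{\ell\in T}\|r_\ell\|_2^2$ and bound each row norm; both reductions rest on the same entrywise estimate and land on the identical constant $\tfrac{2(n+1)k}{N+2}$.
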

\begin{proof} 
Let $A$ be the $(N+1)\times (N+1)$ Chebyshev--Vandermonde matrix, i.e., $A_{ij} = U_j(x_i)$ for $0\leq i,j\leq N$, where $x_i$ is given in~\cref{eq:ChebyshevGrid}. Let $D$ be a diagonal matrix with $D_{i,i} = \sqrt{2/(N+2)}\sqrt{1-x_i^2}$ for $0\leq i\leq N$. By the discrete orthogonality properties of Chebyshev polynomials of the second kind~\cite[Sec.~4.6.1]{mason2002chebyshev}, $DA$ is an orthogonal matrix with 
\[
A^\top D= \begin{bmatrix} \Phi^\top D \\ V^\top \end{bmatrix}, 
\]
where $\Phi$ and $V$ are given in~\cref{eq:phimat} and~\cref{eq:V}, respectively.  Since $A^\top D$ has orthonormal columns, we find that 
\begin{equation}
  \label{eq:chebmatsing}
\left\|\begin{bmatrix} \Phi^\top D \\ V^\top \end{bmatrix}\!\underline{z}\right\|^2_2
=\|\Phi^\top D\underline{z}\|^2_2+\|V^\top\underline{z}\|^2_2=\|\underline{z}\|_2^2, \qquad \underline{z} \in \mathbb{C}^{N+1}.  
\end{equation}
Since $\sqrt{1-x^2}|U_i(x)| \leq 1$ for $x\in[-1,1]$~\cite[(18.14.7)]{dlmf}, each entry of $A^\top D$ has absolute value $\leq\sqrt{2/(N+2)}$ 
\rr{it follows by Cauchy--Schwarz that each 
entry of $\Phi^\top D\underline{z}$ is bounded by 
$\sqrt{\frac{2k}{N+2}}$ where $k$ is the number of nonzero entries in $\underline{z}$,}
so we have
\begin{equation} 
\|\Phi^\top D\underline{z}\|_2^2 \leq \frac{2(n+1)k}{N+2}\|\underline{z}\|_2^2.
\label{eq:FrobeniusInequality} 
\end{equation} 
 Therefore, from~\cref{eq:chebmatsing} and the trivial bound of $\|V^\top\underline{z}\|_2^2\leq \|\underline{z}\|_2^2$, we conclude that
\[
\left(1-\frac{2(n+1)k}{N+2}\right)\!\|\underline{z}\|_2^2\leq \|V^\top\underline{z}\|^2_2 \leq  \|\underline{z}\|_2^2
\]
for any vector $\underline{z}\in\mathbb{C}^{N+1}$ with at most $k$ nonzero entries. The statement immediately follows from the definition of the RIP (see~\cref{eq:RIP}). 
\end{proof}

{\RefTwo \Cref{prop:Vtranspose} tells us that $V^\top$ in~\cref{eq:V} satisfies the RIP with $\delta_{k} <1/3$ if $N+1>6(n+1)k-1$. Since $k$ is the number of samples $x_0,\ldots,x_N$ that lie in ${\rm supp}(\omega)$, it  means that $\bestlzero = \bestlone$ provided that the discrete problem is sufficiently oversampled. Since $k<(N+2)/(6(n+1))$ implies that $k\leq N-n$ when $k\geq 1$ and when $k = 0$ we need $N\geq n$, we conclude from~\cref{sec:L0ell0} that if $N+1>6(n+1)k-1$ and $N\geq n$, then $\bestlone = \bestlzero =p_m$ when $n\geq m$. This proves the third statement of~\cref{thm:MainTheorem}.}

The polynomial $\bestlone$ can be computed by solving the basis pursuit problem in~\cref{eq:LPy}. This means that \cref{prop:Vtranspose} gives us a practical and efficient algorithm for the exact recovery of corrupted polynomials with degrees in the thousands. Often it is the case that one does not know the degree of the corrupted polynomial or $k$. Since the oversampling condition $N+1>6(n+1)k-1$ penalizes taking unnecessarily large $n$, we recommend slowly increasing $n$, computing the error $f - \bestlone$, and stopping at the smallest $n$ for which ${\rm supp}(f - \bestlone)<2$.  

%We do not believe that the constant of ``$12$" is sharp here and expect that the most fruitful attempt at reducing the constant is to improve the ``$1/3$" in the condition that $\delta_{2k} <1/3$~\cite[Thm.~6.9]{foucart2013mathematical} for the particular matrix $V$ in~\cref{eq:V}. We believe it is also possible to derive RIP results related to~\cref{prop:Vtranspose} for other samples $x_0,\ldots,x_N$ such as the Chebyshev points of the first kind or random uniformly distributed points.  

\subsection{Exact recovery with best $\mathbf{L_1}$ approximation}\label{sec:contcompressed_sens}
To begin to highlight the importance of error localization of best $L_1$ polynomial approximants, we now show that $\bestLone$ can also be used for exact recovery of corrupted polynomials when the corruption has sufficiently small support. One can achieve this by demonstrating that a polynomial of degree $\leq n$ is not too concentrated in any small subset of $[-1,1]$. 

\begin{lemma} \label{lem:L1L0}
Let $\Omega_s\subseteq [-1,1]$ be a set of Lebesgue measure $s\geq 0$. For any $n\geq 0$, we have
 \begin{equation} \label{eq:L1localizepoly}
  \int_{\Omega_s} \left|p(x)\right| dx \leq \frac{s(n+1)^2}{2}\int_{-1}^1 \left|p(x)\right| dx
 \end{equation}
 for any polynomial $p$ of degree $\leq n$. 
\end{lemma}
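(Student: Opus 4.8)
The plan is to bound the pointwise size of a degree-$\leq n$ polynomial on any small set $\Omega_s$ by its $L_1$ norm on all of $[-1,1]$, and then integrate over $\Omega_s$. The key quantity is the best constant $C_n(s)$ such that $\int_{\Omega_s}|p| \leq C_n(s)\int_{-1}^1|p|$ for all $p \in \mathcal P_n$ and all measurable $\Omega_s$ of measure $s$. The claim is $C_n(s) \leq s(n+1)^2/2$.

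First I would reduce to a pointwise estimate. The cleanest route is to establish a Nikolskii-type inequality comparing the $L_\infty$ and $L_1$ norms of a degree-$\leq n$ polynomial on $[-1,1]$: namely $\|p\|_\infty \leq \frac{(n+1)^2}{2}\|p\|_1$. Once this is in hand, the lemma follows instantly, since $\int_{\Omega_s}|p| \leq s\|p\|_\infty \leq \frac{s(n+1)^2}{2}\|p\|_1 = \frac{s(n+1)^2}{2}\int_{-1}^1|p|$. So the real content is the Nikolskii inequality with the explicit sharp-looking constant $(n+1)^2/2$.

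To prove $\|p\|_\infty \leq \frac{(n+1)^2}{2}\|p\|_1$, I would use the reproducing-kernel / Christoffel function approach. Write $p$ in the Legendre basis, $p = \sum_{k=0}^n a_k P_k$ with $P_k$ the Legendre polynomials normalized so $\int_{-1}^1 P_k^2 = \frac{2}{2k+1}$. For a fixed evaluation point $t \in [-1,1]$ one has $p(t) = \int_{-1}^1 p(x) K_n(x,t)\,dx$ where $K_n(x,t) = \sum_{k=0}^n \frac{2k+1}{2} P_k(x)P_k(t)$ is the degree-$n$ reproducing kernel for $L^2[-1,1]$. Hence $|p(t)| \leq \|p\|_1 \cdot \max_{x}|K_n(x,t)|$, and one checks that $\max_{x,t}|K_n(x,t)| = K_n(1,1) = \sum_{k=0}^n \frac{2k+1}{2} = \frac{(n+1)^2}{2}$, using $|P_k(x)| \leq 1 = P_k(1)$ on $[-1,1]$. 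That delivers the bound with exactly the stated constant. An alternative is to invoke a known Markov–Nikolskii inequality of the form $\|p\|_{L^\infty[-1,1]} \leq c\, n^2 \|p\|_{L^1[-1,1]}$ directly from the literature, but carrying it through with the clean constant $(n+1)^2/2$ is easiest via the kernel computation just described.

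The main obstacle is pinning down the constant exactly, rather than just up to a factor: the reproducing-kernel argument needs the two facts $|P_k(x)| \leq P_k(1) = 1$ on $[-1,1]$ (Legendre polynomials attain their max at the endpoint) and $\sum_{k=0}^n \frac{2k+1}{2} = \frac{(n+1)^2}{2}$ (an elementary sum), and one should double-check that the supremum of $|K_n(x,t)|$ over the square $[-1,1]^2$ is really attained at a corner. I would present the kernel identity, justify the two facts, and then the lemma drops out in one line; I do not expect the measure-theoretic side (that $\Omega_s$ is an arbitrary measurable set of measure $s$) to cause any trouble, since only $\int_{\Omega_s}|p| \leq s\|p\|_\infty$ is used.
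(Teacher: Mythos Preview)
Your argument is correct. The reproducing-kernel computation is clean: the identity $p(t)=\int_{-1}^1 p(x)K_n(x,t)\,dx$ with $K_n(x,t)=\sum_{k=0}^n \tfrac{2k+1}{2}P_k(x)P_k(t)$ gives $|p(t)|\leq \|p\|_1\max_x|K_n(x,t)|$, and since $|P_k|\leq 1$ on $[-1,1]$ one has $\max_{x,t}|K_n(x,t)|\leq \sum_{k=0}^n\tfrac{2k+1}{2}=\tfrac{(n+1)^2}{2}$, with equality at $x=t=1$. The lemma then follows from $\int_{\Omega_s}|p|\leq s\|p\|_\infty$.

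The paper does not give its own argument here: it simply cites the exercise in Benyamini et~al. Your proposal therefore supplies a self-contained proof where the paper defers to a reference. The kernel approach you use is in fact the standard route to this sharp Nikolskii constant, and is very likely what the cited exercise has in mind; in any case there is nothing to correct or add.
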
 
\begin{proof} 
This statement is proved in~\cite[Sec.~4.2, Exercise 6]{benyamini20121}. 
%Let $p\in\mathcal{P}_n$ and let $x^*\in[-1,1]$ be such that $|p(x^*)| = \|p\|_{\infty}$. Since $\max_{x\in[-1,1]}|p'(x)| \leq n^2\max_{x\in[-1,1]}|p(x)|$~\cite[Thm.~5.1.8]{borwein2012polynomials}, there is an interval $\mathcal{I}\subset[-1,1]$ containing $x^*$ of width at least $1/n^2$\footnote{{\RefTwo One can show that there is an interval of weight $2/n^2$ for which $p(x)$ and $p(x^*)$ are the same sign; however, at most half of this interval may fall outside of $[-1,1]$.}} for which $p(x)$ is of the same sign as $p(x^*)$. By taking the area of the triangle of width $1/n^2$ and height $|p(x^*)|$, we find that $\int_{\mathcal{I}} |p(x)|dx\geq |p(x_*)|/(2n^2)$. The result follows as we also have $\int_{\Omega_s} |p(x)| dx \leq s |p(x^*)|$. 
\end{proof}

\Cref{lem:L1L0} tells us that polynomials of degree $\leq n$ cannot be too localized in a set of small measure. In particular, if $0\leq |\Omega_s|< 1/(n+1)^2$, then
\begin{equation} 
\int_{\Omega_s} |p(x)| dx \leq \int_{[-1,1]\setminus \Omega_s} |p(x)| dx, \qquad p\in\mathcal{P}_n,
\label{eq:L1energyhole} 
\end{equation} 
with equality if and only if $p$ is the zero polynomial. A consequence of~\cref{eq:L1energyhole} is that a corrupted polynomial can be exactly recovered by best $L_1$ polynomial approximation.  

\begin{corollary}\label{cor:L1L0condition}
Let $f = p_m + \omega$ be a $s$-corrupted polynomial of degree $\leq m$ on $[-1,1]$. Then, the best $L_1$ polynomial approximant of degree $\leq n$ to $f$ is $p_m$ if {\RefOne $n\geq m$} and $s<1/(n+1)^2$. 
\end{corollary}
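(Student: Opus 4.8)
The plan is to argue directly, without invoking the existence/uniqueness theory: I will show that for every $q\in\mathcal{P}_n$ with $q\neq p_m$ one has $\|f-q\|_1>\|f-p_m\|_1$, which at once proves that $p_m$ is a best $L_1$ approximant of degree $\leq n$ and that it is the unique one. Write $\Omega_s={\rm supp}(\omega)$, so that $|\Omega_s|\leq s<1/(n+1)^2$ and $\omega$ vanishes pointwise on $[-1,1]\setminus\Omega_s$. Since $f-p_m=\omega$ is supported on $\Omega_s$, we have $\|f-p_m\|_1=\int_{\Omega_s}|\omega|\,dx$.

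Next I would split the error of a competitor $q$ over $\Omega_s$ and its complement. On $[-1,1]\setminus\Omega_s$ we have $f-q=p_m-q$, while on $\Omega_s$ the reverse triangle inequality gives $|f-q|=|(p_m-q)+\omega|\geq |\omega|-|p_m-q|$. Adding the two contributions,
\[
\|f-q\|_1 \;\geq\; \int_{[-1,1]\setminus\Omega_s}|p_m-q|\,dx \;+\; \int_{\Omega_s}|\omega|\,dx \;-\; \int_{\Omega_s}|p_m-q|\,dx,
\]
so that $\|f-q\|_1-\|f-p_m\|_1 \geq \int_{[-1,1]\setminus\Omega_s}|p_m-q|\,dx-\int_{\Omega_s}|p_m-q|\,dx$.

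The key step is then to apply the localization bound. Since $n\geq m$, the polynomial $p_m-q$ lies in $\mathcal{P}_n$, and since $|\Omega_s|<1/(n+1)^2$, \cref{eq:L1energyhole} (equivalently \cref{lem:L1L0}) shows $\int_{\Omega_s}|p_m-q|\,dx\leq\int_{[-1,1]\setminus\Omega_s}|p_m-q|\,dx$, with equality only when $p_m-q$ is the zero polynomial. As $q\neq p_m$, the inequality is strict, hence $\|f-q\|_1>\|f-p_m\|_1$, which is the desired conclusion.

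I do not anticipate a serious obstacle; the only points requiring care are bookkeeping. First, all integrals appearing above are finite because $\omega\in L_1([-1,1])$ (implicit in the best $L_1$ problem being well posed) and $p_m-q$ is bounded on $[-1,1]$. Second, one must verify that the equality case of \cref{lem:L1L0} is genuinely strict for nonzero polynomials: this holds because, under the hypothesis $s<1/(n+1)^2$, the constant $s(n+1)^2/2$ in \cref{eq:L1localizepoly} is strictly below $1/2$, forcing $\int_{\Omega_s}|p|\,dx<\tfrac12\int_{-1}^1|p|\,dx$ unless $p\equiv 0$, and then $\int_{[-1,1]\setminus\Omega_s}|p|\,dx>\tfrac12\int_{-1}^1|p|\,dx>\int_{\Omega_s}|p|\,dx$. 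Finally, applying \cref{lem:L1L0} with the actual measure $|\Omega_s|\leq s$ rather than with $s$ itself causes no issue, since the bound is monotone in the measure of the set.
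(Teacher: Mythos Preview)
Your proof is correct and follows essentially the same approach as the paper: split the integral over $\Omega_s$ and its complement, apply the reverse triangle inequality on $\Omega_s$, and conclude via \cref{eq:L1energyhole} applied to the polynomial $p_m-q\in\mathcal{P}_n$. The paper writes the competitor as $p_m+\delta p$ rather than $q$, but the argument is identical, and your additional remarks on integrability and strictness of the equality case are sound.
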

\begin{proof} 
Let $\delta p\in\mathcal{P}_n$ and let $\Omega_s\subset [-1,1]$ be the support of $\omega$. Since $[-1,1] = \Omega_s \cup ([-1,1]\setminus \Omega_s)$, we have by the triangle inequality
\begin{equation} 
\begin{aligned} 
\|f - p_m - \delta p\|_{1} &= \int_{\Omega_s} \left|f(x) - p_m(x) - \delta p(x)\right| dx + \int_{[-1,1]\setminus\Omega_s} \left|\delta p(x)\right| dx\\
& \geq \int_{\Omega_s} \left|f(x) - p_m(x)\right| dx - \int_{\Omega_s} \left|\delta p(x)\right| dx + \int_{[-1,1]\setminus\Omega_s} \left|\delta p(x)\right| dx \\
&\geq  \|f - p_m\|_{1},
\end{aligned} 
\label{eq:BunchOfInequalities} 
\end{equation} 
where the last inequality follows from~\cref{eq:L1energyhole} as well as the fact that $f(x)-p_m(x)=0$ for $x\in [-1,1]\setminus\Omega_s$. An equality holds in~\cref{eq:BunchOfInequalities} if and only if $\delta p = 0$.  We conclude that $p_m$ is the unique best $L_1$ polynomial approximant to $f$ of degree $\leq n$.
\end{proof} 

This proves the fourth and final statement of~\cref{thm:MainTheorem} and explains regime (b) in~\cref{fig:CorruptedFunction}. It tells us that if a polynomial is corrupted on a subset of $[-1,1]$ that has small enough Lebesgue measure, then the best $L_1$ polynomial approximant exactly recovers the polynomial. \Cref{fig:corrupted} illustrates~\cref{cor:L1L0condition} for the corrupted polynomial $f = T_5 + \omega$, where $T_5$ is the degree $5$ Chebyshev polynomial of the first kind and ${\rm supp}(\omega) = [-.7,-.67]\cup[.9,.903]$. Using the fact that $\bestLone=\bestlone$, one can efficiently recover $T_5$ to within essentially machine precision. Numerically, we find that $\|\bestLone - T_5\|_{\infty} \approx 1.22\times 10^{-15}$. 

To highlight the importance of the $L_1$-norm for~\cref{cor:L1L0condition}, we consider the best polynomial approximants of degree $\leq 5$ to $f$ in the $L_2$- and $L_\infty$-norm (see~\cref{fig:corrupted} (right)). One finds that any corruption of arbitrarily small support prevents the best $L_2$ and $L_\infty$ polynomial approximants from recovering the uncorrupted polynomial.  
\begin{figure}
  \begin{minipage}{.49\linewidth}
\begin{overpic}[width=\textwidth]{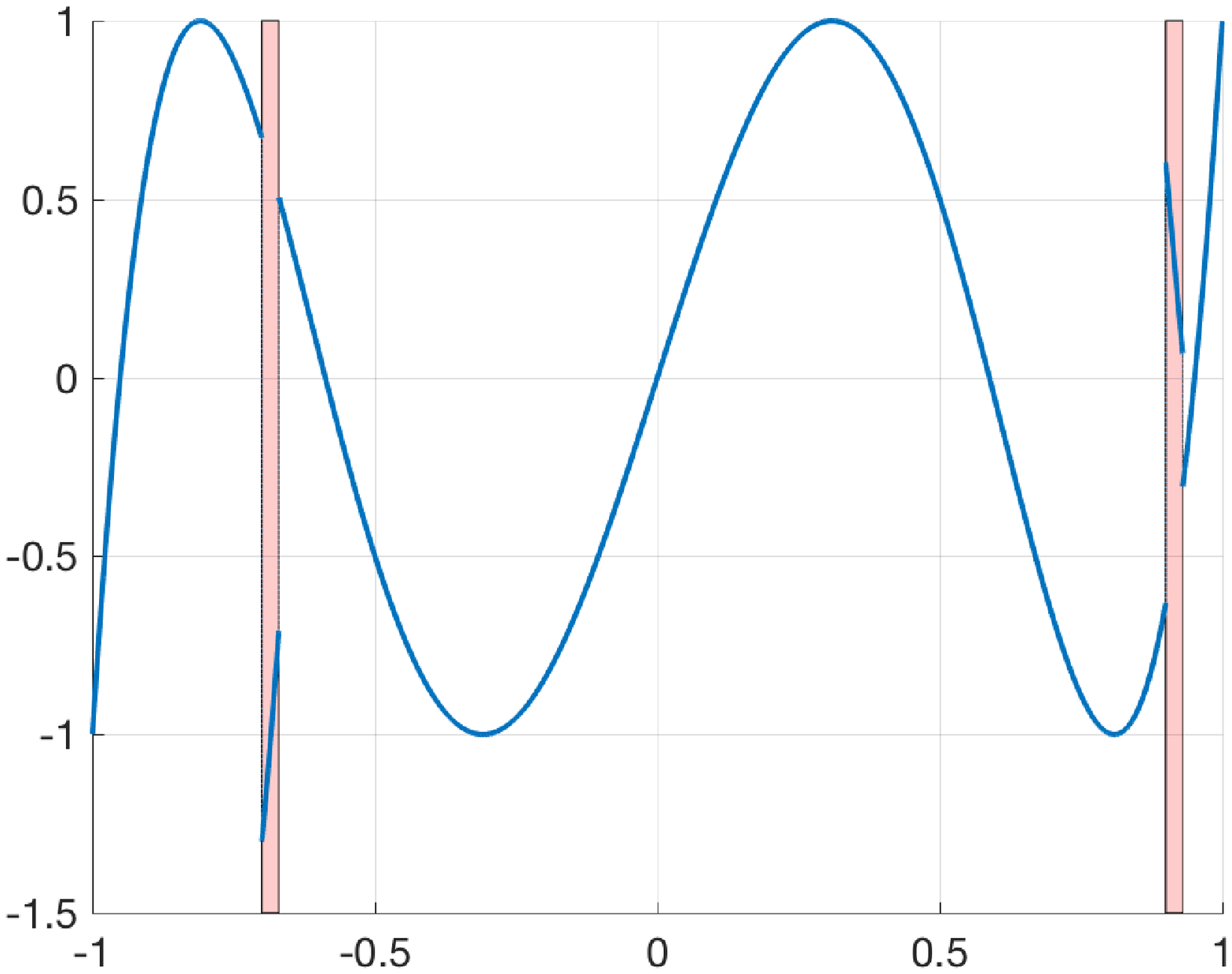}      
\put(50,0) {$x$}
\put(38,72) {$T_5(x) + \omega(x)$}
\put(20,25) {\rotatebox{90}{corruption}}
\put(82,25) {\rotatebox{90}{corruption}}
\end{overpic} 
  \end{minipage}
  \begin{minipage}{.49\linewidth}
  \begin{overpic}[width=\textwidth]{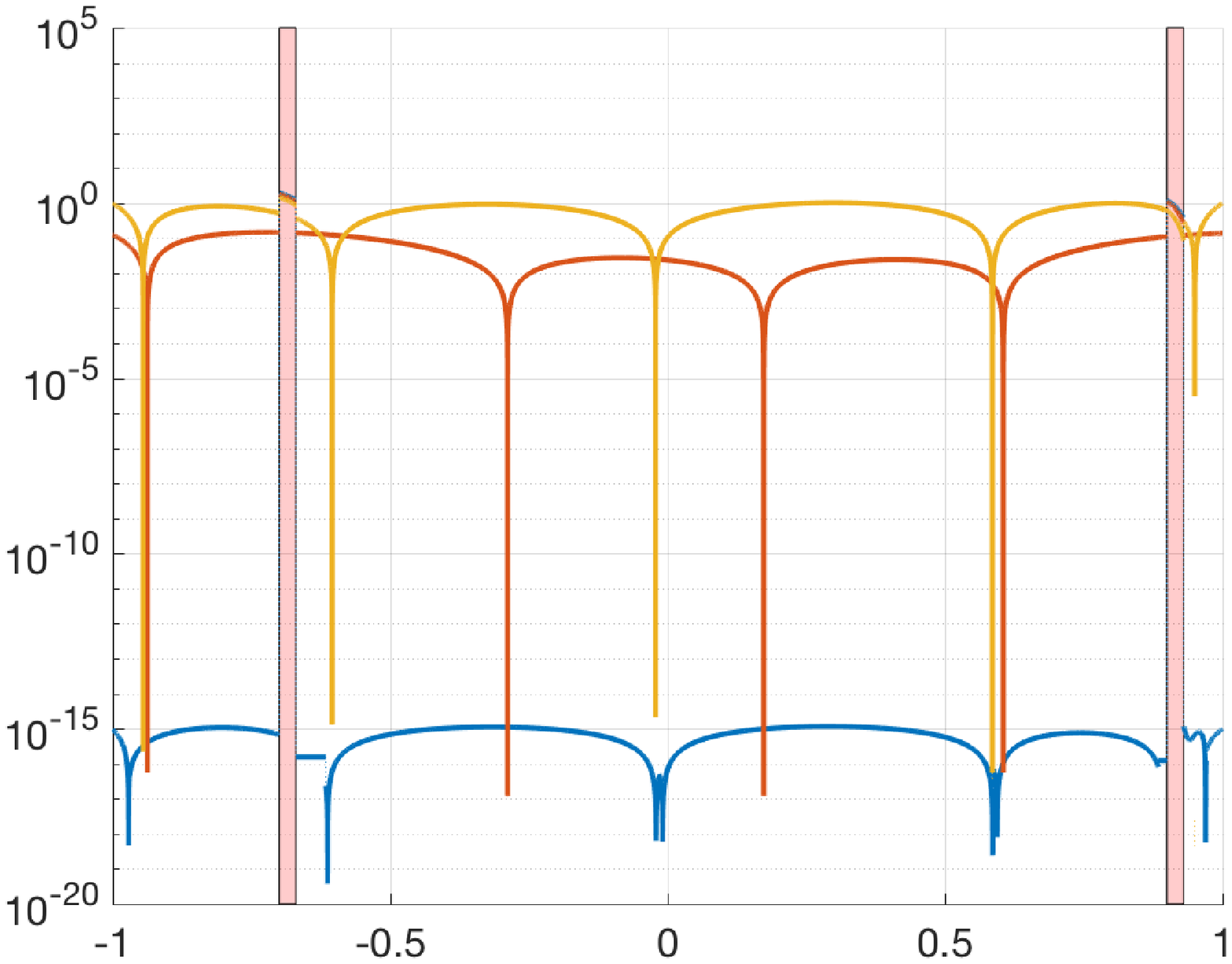}  
  \put(50,0) {$x$}    
  %\put(33,60) {$\left|f(x) - p_5^{L_\infty}\!(x)\right|$}
  %\put(34,45) {$\left|f(x) - p_5^{L_2}\!(x)\right|$}
  %\put(34,23) {$\left|f(x) - p_5^{L_1}\!(x)\right|$}
  \put(20,25) {\rotatebox{90}{corruption}}
\put(82,25) {\rotatebox{90}{corruption}}
\end{overpic} 
  \end{minipage} 
  \caption{Left: Corrupted polynomial $f=T_5+\omega$, where $T_5$ is the degree $5$ Chebyshev polynomial of the first kind and ${\rm supp}(\omega) = [-.7,-.67]\cup[.9,.903]$ (shaded red). Right: The error $\smash{|f(x)-p^*_5(x)|}$, where $p^*_5$ is the best degree $\leq 5$ polynomial approximant to $f$ in the $L_1$-norm (blue line), $L_2$-norm (red line), and $L_\infty$-norm (yellow line). One can see that $\smash{|f(x)-p_5^{L_1}\!(x)|}$ is essentially machine precision for $x\not\in{\rm supp}(\omega)$ whereas $\smash{p_5^{L_2}\!}$ and $\smash{p_5^{L_\infty}\!}$ do not recover $T_5$.}
  \label{fig:corrupted}
\end{figure}

{\RefTwo The bound on $s$ of $s<1/(n+1)^2$ in~\cref{cor:L1L0condition} is probably not sharp. Though, we know that it cannot be increased above $\pi^2/(2(n+2)^2)$~\cite[Sec.~4.2]{benyamini20121}. This means that the algebraic scaling with respect to $n$ is definitive.} In~\cref{sec:middleCorruption}, we extend~\cref{cor:L1L0condition} by demonstrating that the location of the support of the corruption in $[-1,1]$ is important, and more is allowed provided that the corruption occurs away from $\pm 1$. 

For concreteness, we have assumed that the sample points are the Chebyshev points given in~\cref{eq:ChebyshevGrid}. This choice is recommended when the samples can be taken at arbitrary points in $[-1,1]$. However, in some cases, the sample points may be given a priori and cannot be chosen. Most of our results carry over to such cases with minor modifications and assumptions on the distribution of sample points. 

\section{Near-recovery of corrupted smooth functions}\label{sec:L0L1smooth}
When recovering a corrupted polynomial $f = p_m + \omega$, the degree of $p_m$ is usually unknown so we compute best $L_1$ polynomial approximants to $f$ of degree $\leq n$ for a slowly increasing sequence of $n$, stopping when ${\rm supp}(f - \bestLone)<2$. For the majority of this process $n<m$ and one may wonder what $\bestLone$ is achieving in this regime (see~\cref{fig:CorruptedFunction} (a)). Similarly, if $f$ is a corrupted smooth function $f = f_0 + \omega$, where $f_0$ is a continuous function (not necessarily a polynomial) on $[-1,1]$, then one cannot hope for exact recovery using best $L_1$ polynomial approximation. Instead, we find that $\bestLone$ delivers a near-recovery of $f_0$ in the sense that $\bestLone$ is a near-best $L_1$ approximation to $f_0$, provided that the support of the corruption is small and $f_0$ can be well-approximated by a degree $\leq n$ polynomial.  We first show that the best $L_1$ approximations for $f$ and $f_0$ are relatively close to each other. 

\begin{theorem}
Let $f = f_0 + \omega$ be a $s$-corrupted function on $[-1,1]$, where $f_0:[-1,1]\rightarrow\mathbb{R}$ is continuous, and $\bestLone$ be a best $L_1$ polynomial approximant of degree $\leq n$ to $f$. If $s<1/(n+1)^2$, then 
\[
\| \bestLone - p_n^* \|_{1} \leq \frac{4}{2-s(n+1)^2} \| f_0 - p_n^* \|_{1},
\]
where $p_n^*$ is the best $L_1$ approximant of degree $\leq n$ to $f_0$ on $[-1,1]$. 
\label{thm:RecoverySmoothFunctions} 
\end{theorem}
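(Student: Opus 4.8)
The plan is to bound the $L_1$ norm of the polynomial difference $\delta := \bestLone - p_n^* \in \mathcal{P}_n$ directly, by combining the optimality of $\bestLone$ for $f$ with the localization estimate of \cref{lem:L1L0}. Write $\Omega := {\rm supp}(\omega)$, so that $|\Omega| = s$ and $f = f_0$ on $[-1,1]\setminus\Omega$. I would split every integral over $[-1,1]$ into its contribution from $\Omega$ and its contribution from $[-1,1]\setminus\Omega$. On $[-1,1]\setminus\Omega$ all of the errors in play are genuine errors of approximation to $f_0$; on $\Omega$ the corruption is arbitrary, so there the only quantity I am allowed to keep is $\delta$ itself, which is a polynomial of degree $\leq n$ and hence controlled by \cref{lem:L1L0}.

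First I would use optimality: since $\bestLone$ is a best $L_1$ approximant to $f$, we have $\|f - \bestLone\|_1 \leq \|f - p_n^*\|_1$. Splitting both sides over $\Omega$ and $[-1,1]\setminus\Omega$, using $f = f_0$ off $\Omega$, and applying the reverse triangle inequality $\int_\Omega |f - \bestLone| \geq \int_\Omega |f - p_n^*| - \int_\Omega |\delta|$ on the left, the $\int_\Omega |f - p_n^*|$ terms cancel and one is left with
\[
\int_{[-1,1]\setminus\Omega} |f_0 - \bestLone| \ \leq\ \int_{[-1,1]\setminus\Omega} |f_0 - p_n^*| \ +\ \int_\Omega |\delta|.
\]
Next, on $[-1,1]\setminus\Omega$ the ordinary triangle inequality gives $|\delta| \leq |f_0 - \bestLone| + |f_0 - p_n^*|$, so integrating over $[-1,1]\setminus\Omega$ and inserting the previous display yields $\int_{[-1,1]\setminus\Omega} |\delta| \leq 2\int_{[-1,1]\setminus\Omega} |f_0 - p_n^*| + \int_\Omega |\delta|$.

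Finally I would close the argument with \cref{lem:L1L0}: since $\delta \in \mathcal{P}_n$ and $|\Omega| = s$, we have $\int_\Omega |\delta| \leq \tfrac{s(n+1)^2}{2}\|\delta\|_1 = \tfrac{s(n+1)^2}{2}\big(\int_\Omega |\delta| + \int_{[-1,1]\setminus\Omega} |\delta|\big)$, so under the hypothesis $s(n+1)^2 < 1$ one can solve for $\int_\Omega |\delta|$ in terms of $\int_{[-1,1]\setminus\Omega} |\delta|$, substitute this into the inequality from the previous step, and then solve the resulting self-referential inequality for $\|\delta\|_1 = \int_\Omega |\delta| + \int_{[-1,1]\setminus\Omega} |\delta|$. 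Bounding $\int_{[-1,1]\setminus\Omega} |f_0 - p_n^*| \leq \|f_0 - p_n^*\|_1$ then produces an estimate of the form $\|\bestLone - p_n^*\|_1 \leq C(s,n)\,\|f_0 - p_n^*\|_1$ with $C(s,n) \to 2$ as $s \to 0$, which is the asserted bound.

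The main obstacle is precisely the bookkeeping that keeps $\omega$ out of every estimate: because the corruption is arbitrary, any norm of $\omega$ can be enormous, so each use of the triangle inequality on $\Omega$ must be arranged so that the surviving integrand is the polynomial $\delta$ and nothing else — the cancellation of the $\Omega$-terms that occurs right after invoking optimality of $\bestLone$ is the delicate point, and it is what forces one to compare $\bestLone$ only with $p_n^*$ (rather than with $f$ or $f_0$) on $\Omega$. A secondary, purely algebraic, difficulty is to carry the constant carefully through the two nested eliminations (first removing $\int_\Omega|\delta|$ via \cref{lem:L1L0}, then removing the self-reference in $\|\delta\|_1$) so as to land on the stated denominator $2 - s(n+1)^2$ rather than a weaker one.
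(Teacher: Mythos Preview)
Your approach is essentially identical to the paper's: both start from the optimality inequality $\|f-\bestLone\|_1\le\|f-p_n^*\|_1$, split every integral over $\Omega_s$ and its complement, use the (reverse) triangle inequality so that on $\Omega_s$ only the polynomial $\delta=\bestLone-p_n^*$ survives and the corruption never appears, and then close with \cref{lem:L1L0}. One remark on the constant: if you carry your algebra through you obtain $\|\delta\|_1\le \tfrac{2}{1-s(n+1)^2}\|f_0-p_n^*\|_1$ rather than the stated $\tfrac{4}{2-s(n+1)^2}$; the paper's own chain, read carefully, yields the same constant (the passage from $\int_{[-1,1]\setminus\Omega_s}|\delta p|-\int_{\Omega_s}|\delta p|$ to $\big(1-\tfrac{s(n+1)^2}{2}\big)\|\delta p\|_1$ silently drops the negative term), so this discrepancy is not a defect of your argument.
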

\begin{proof} 
Let $\delta p\in\mathcal{P}_n$ and $\Omega_s = {\rm supp}(\omega)$. Since $[-1,1] = \Omega_s \cup ([-1,1]\setminus \Omega_s)$, and by the triangle inequality, we have
\begin{align*}
  \|f-p_n^*-\delta p\|_{1}\! &= \! \int_{\Omega_s}\left|f(x)-p_n^*(x)-\delta p(x)\right| dx +\int_{[-1,1]\setminus\Omega_s}\!\!\!\!\!\! \left|f(x)-p_n^*(x)-\delta p(x)\right|dx\\
&\geq \!\int_{\Omega_s}\!\!\!\left(\left|f(x)-p_n^*(x)\right|-\left|\delta p(x)\right|\right) \!dx
+\!\int_{[-1,1]\setminus\Omega_s}\!\!\!\!\!\!\!\!\!\!\!\!\left(\left|\delta p(x)\right|-\left|f(x)-p_n^*(x)\right|\right)\!dx\\
&\geq \|f-p_n^*\|_{1}-2\|f_0-p_n^*\|_{1}+\int_{[-1,1]\setminus\Omega_s}\!\!\!\!\left|\delta p(x)\right|dx- \int_{\Omega_s}\left|\delta p(x)\right| dx,
\end{align*}
where the last inequality holds since $[-1,1] = \Omega_s \cup ([-1,1]\setminus \Omega_s)$ and $f(x)= f_0(x)$ for $x\not\in \Omega_s$.  From~\cref{eq:L1localizepoly}, we find that 
\[
\int_{\Omega_s}\left|\delta p(x)\right| dx\leq \frac{s(n+1)^2}{2} \|\delta p\|_{1}, \qquad \int_{[-1,1]\setminus\Omega_s}\left|\delta p(x)\right|dx \geq \left(1-\frac{s(n+1)^2}{2}\right)\!\|\delta p\|_{1}.
\]
Hence, for any $\delta p\in\mathcal{P}_n$ we have the inequality 
\[
\|f-p_n^*-\delta p\|_{1}\geq \|f-p_n^*\|_{1}-2\|f_0-p_n^*\|_{1}+ \left(1-\frac{s(n+1)^2}{2}\right)\!\|\delta p\|_{1}.
\]
Finally, by setting $\delta p =  \bestLone-p_n^*$ and noting that $\|f - \bestLone\|_{1}\leq \|f-p_n^*\|_{1}$ we conclude that
\[
-2\|f_0-p_n^*\|_{1}+ \left(1- \frac{s(n+1)^2}{2}\right)\|\bestLone-p_n^*\|_{1} \leq 0. 
\]
The result follows by rearranging this inequality. 
\end{proof}

\Cref{thm:RecoverySmoothFunctions} shows that best $L_1$ polynomial approximation is useful for near-recovery of a corrupted smooth function. More precisely, when $s<1/(n+1)^2$ we have 
\begin{equation} 
\| f_0-\bestLone \|_{1} \leq \left(1 + \frac{4}{2-s(n+1)^2}\right) \min_{q_n\in \mathcal{P}_n} \| f_0 - q_n \|_{1},
\label{eq:nearbest} 
\end{equation}
and we conclude that a best $L_1$ approximant of $f$ recovers $f_0$ as best it can, up to a factor that depends on $n$ and $s$. %When $n=0$, a simple argument shows that the constant in the parenthesis in \cref{eq:nearbest} is replaced by $1 + \frac{2}{1-2s}$.

The inequality in~\cref{eq:nearbest} also partially explains regime (a) in~\cref{fig:CorruptedFunction}.  It provides theoretical justification that $p_5^{L_1}\!$ is a near-best polynomial approximant to $P_8$ in~\cref{fig:CorruptedFunction}. For the example in~\cref{fig:CorruptedFunction}, we observe this near-recovery phenomenon since
\[
\| P_8 - p_5^{L_1}\! \|_{1} \approx 0.450, \qquad  \min_{q_5\in \mathcal{P}_5} \| P_8 - q_5 \|_{1} \approx 0.414,
\]
where $p_5^{L_1}\!$ is the best $L_1$ approximant of degree $\leq 5$ to the corrupted function. 

Unlike corrupted polynomials (see~\cref{sec:L0L1}), $f_0$ cannot be exactly recovered by $\bestlone$. Nonetheless, we find that $\bestlone$ is often still a near-best approximant to $f_0$, i.e., $\bestlone\approx p_n^*$.  By interpreting $f_0-\bestlone$ as noise, we observe that $\ell_1$ minimization gives a stable signal recovery in the presence of noise, a phenomenon that is appreciated in the classical compressed sensing context~\cite{candes2006stable}. Making this observation precise in our setting is left as an open problem. Since by \cref{thm:RecoverySmoothFunctions} we also have $\bestLone \approx  p_n^*$, it follows that $\bestlone \approx \bestLone$ and $\bestlone$ is an excellent initial guess for Newton's method for computing $\bestLone$ (see~\cref{sec:newton}).

\subsection{Related studies}\label{sec:survey}
{\RefTwo{The contents of Sections~\ref{sec:L0L1} and \ref{sec:L0L1smooth} can be regarded as contributions in compressed sensing, and a number of related studies are available in the literature. 
For (exact and near-exact) recovery of corrupted functions with $\ell_1$ minimization, examples include the paper by Adcock, Brugiapaglia
and Webster~\cite{shin2016correcting}, and Shin and Xiu~\cite{adcock2019correcting}. Unlike this work, these papers consider recovering high-dimensional functions, describing probabilistic methods by taking random samples. 
Here we focus on univariate polynomials and reveal connections between $L_0,L_1,\ell_0$ and $\ell_1$ minimizers, and derive a deterministic recovery algorithm (under assumptions on the size of sampled corruption $k$) with $\ell_1$ minimization. Few of the results in this paper appear to be trivially generalizable to the higher-dimensional setting; this is left as an interesting open problem.

In the more classical setting of recovering a discrete signal (rather than a function) from a corrupted vector of observations, numerous contributions are available in the literature. See for example~\cite{candes2005error,candes2005decoding,laska2009exact,wright2010dense} and the references therein. Ideas in compressed sensing have also been applied for general high-dimensional function approximation~\cite{adcock2017compressed,cohen2015approximation}.}}

\section{Error localization of best $\mathbf{L_1}$ polynomial approximants}\label{sec:theory}
In~\cref{sec:L0L1,sec:L0L1smooth} we saw that $\bestLone$ can be used for recovering corrupted polynomials and smooth functions. This is fundamentally due to the error localization properties of best $L_1$ polynomial approximation. The error localization properties of $\bestLone$ are also important when approximating continuous functions $f:[-1,1]\rightarrow \mathbb{R}$ that one might not necessarily view as corrupted functions. We observe that continuous functions with singularities often have $|f(x) - \bestLone(x)| \ll \| f- \bestLinf \|_{\infty}$ for most $x\in[-1,1]$.  

To make this precise, recall the definition of $\Omega_n$ in~\cref{eq:Omega}. By definition of $\Omega_n$, we find that $\| f- \bestLone \|_{1} \geq \tfrac{|\Omega_n|}{2}\| f- \bestLinf \|_{\infty}$ and thus,
\begin{equation}
 0 < |\Omega_n|  \leq  2 \| f- \bestLone \|_{1}\Big/ \| f- \bestLinf \|_{\infty}.
\label{eq:EasyOmegaBound}
\end{equation} 
Therefore, the measure of $\Omega_n$ is bounded above by the disparity between the magnitude of $\| f- \bestLone \|_{1}$ and $\| f- \bestLinf \|_{\infty}$. If $\| f- \bestLone \|_{1}\rightarrow 0$ asymptotically faster than $\| f- \bestLinf \|_{\infty}\rightarrow 0$ as $n\rightarrow \infty$, then the error $f(x) - \bestLone(x)$ must be highly localized for sufficiently large $n$. An upper bound on $|\Omega_n|$ follows from an upper bound on $\| f- \bestLone \|_{1}$ and a lower bound on $\|f- \bestLinf \|_{\infty}$. 

\subsection{Error localization of best $\mathbf{L_1}$ approximants to $\mathbf{\sqrt{1-x^2}}$}\label{sec:ExampleSqrt}
Consider the function $f(x) = \sqrt{1-x^2}$, which is continuous on $[-1,1]$ with square root singularities at $\pm 1$. Here, we show that $|\Omega_n| = \mathcal{O}(n^{-2}\log n)$ proving that $\bestLone(x)$ is a better pointwise estimate to $f(x)$ than $\bestLinf(x)$ for all $x\in[-1,1]$ except for a set of measure $\mathcal{O}(n^{-2}\log n)$.

By~\cite[Lem.~4]{fiedler1990best}, we know that when $n$ is an even integer we have $\bestLone=\pcheb$ for $\sqrt{1-x^2}$, where $\pcheb$ is the degree $n$ Chebyshev interpolant of $\sqrt{1-x^2}$ (see~\cref{eq:ChebyshevInterpolant}). This allows us to derive an explicit expression for $\|f - \bestLone\|_{1}$ by using an explicit formula for $\|f - \pcheb\|_{1}$~\cite{brass1988remark}. By applying the formula in~\cite{brass1988remark} to $\sqrt{1-x^2}$, we find that  
\[
\|f - \bestLone\|_{1} = 2 \!\left|\sum_{\nu=0}^\infty (2\nu+1)^{-1} b_{(\nu+1)(n+2)-1}\right|\!, \quad b_j = \begin{cases} -\frac{8}{(j-1)(j+1)(j+3)\pi}, & j = \text{ even}, \\ 0, & j = \text{ odd}.\end{cases} 
\]
Here, the values of $b_j$ are derived as the expansion coefficients of $\sqrt{1-x^2}$ in a Chebyshev series of the second kind. That is, 
\[
\sqrt{1-x^2} = \sum_{j=0}^\infty b_j U_j(x), \qquad b_j = \frac{2}{\pi}\int_{-1}^1 (1-x^2) U_j(x) dx, \qquad j\geq 0. 
\]
Since $|b_j| \leq 16(j+1)^{-3}/\pi$ for $j>0$, we can bound $\|f - \bestLone\|_{1}$ by 
\[
\|f - \bestLone\|_{1} \leq \frac{32}{\pi(n+2)^3} \sum_{\nu=0}^\infty \frac{1}{(2\nu+1)(\nu+1)^3} \leq \frac{64}{\pi (n+1)^3},
\]
where the last inequality uses the crude bounds of $\sum_{\nu=0}^\infty (2\nu+1)^{-1}(\nu+1)^{-3} \leq 2$ and $n+2\geq n+1$.  

We now seek a lower bound on $\|f- \bestLinf \|_{\infty}$. Let $p_n^{\rm proj}(x) = \sum_{j=0}^n a_j T_j(x)$ be the Chebyshev expansion of the first kind for $\sqrt{1-x^2}$ that is truncated after $n+1$ terms. The values of $a_j$ are simple to calculate: $a_{2j-1}=0$ for all integers $j$, and
\[
a_0 = \frac{1}{\pi}\int_{-1}^1 T_0(x)dx = \frac{2}{\pi}, \qquad a_{2j} = \frac{2}{\pi}\int_{-1}^1 T_{2j}(x)dx = \frac{4}{(1-4j^2)\pi}, \quad j\geq 1.
\]
Assuming $n$ is an even integer, we find that
\[
p_n^{\rm proj}(1) = \sum_{j=0}^n a_j = \frac{2}{\pi} {\RefTwo +} \frac{4}{\pi}\sum_{j=1}^{n/2} \frac{1}{1-4j^2} = \frac{2}{\pi}\left(1-\frac{n}{n+1}\right) = \frac{2}{\pi (n+1)}.
\]
Thus, $\|f - p_n^{\rm proj}\|_{\infty} \geq 2/(\pi (n+1))$ for an even integer $n$. By~\cite[Cor.~4.1]{mason1983near}, we know that 
\[
\|f - p_n^{\rm proj}\|_{\infty} \leq (1 + \sigma_n) \|f - \bestLinf\|_{\infty}, \qquad \sigma_n = \frac{1}{\pi} \int_0^\pi \frac{\left|\sin(n + 1/2)\theta\right|}{\sin(\theta/2)}d\theta.
\]
We conclude from~\cref{eq:EasyOmegaBound} that for $f(x) = \sqrt{1-x^2}$ we have
\[
|\Omega_n| \leq \frac{128}{\pi (n+1)^3}  \frac{\pi (n+1)(1+\sigma_n)}{2} = \frac{64(1+\sigma_n)}{(n+1)^2} = \mathcal{O}(n^{-2}\log n), 
\]
where the final equality holds since it is known that $\sigma_n \sim 4\pi^{-2} \log n$~\cite[Eq.~20]{mason1983near}. 

\Cref{fig:sqrt1-x2} (left) shows the error $|f(x)-\bestLone(x)|$ for $x\in [0,1)$ demonstrating that it is localized near $x= \pm 1$. The measure of $|\Omega_n|$ is shown in \cref{fig:sqrt1-x2} (right) where it is numerically observed that $|\Omega_n| = \mathcal{O}(n^{-2})$. When $n =1000$, we find that $\smash{|f(x)-\bestLone(x)|<\tfrac{1}{2}\| f - \bestLinf \|_{\infty}}$ for all $x\in[-1,1]$ except for a set of measure $<10^{-5}$.
\begin{figure}
  \begin{minipage}[t]{0.495\hsize}
 \begin{overpic}[width=\textwidth]{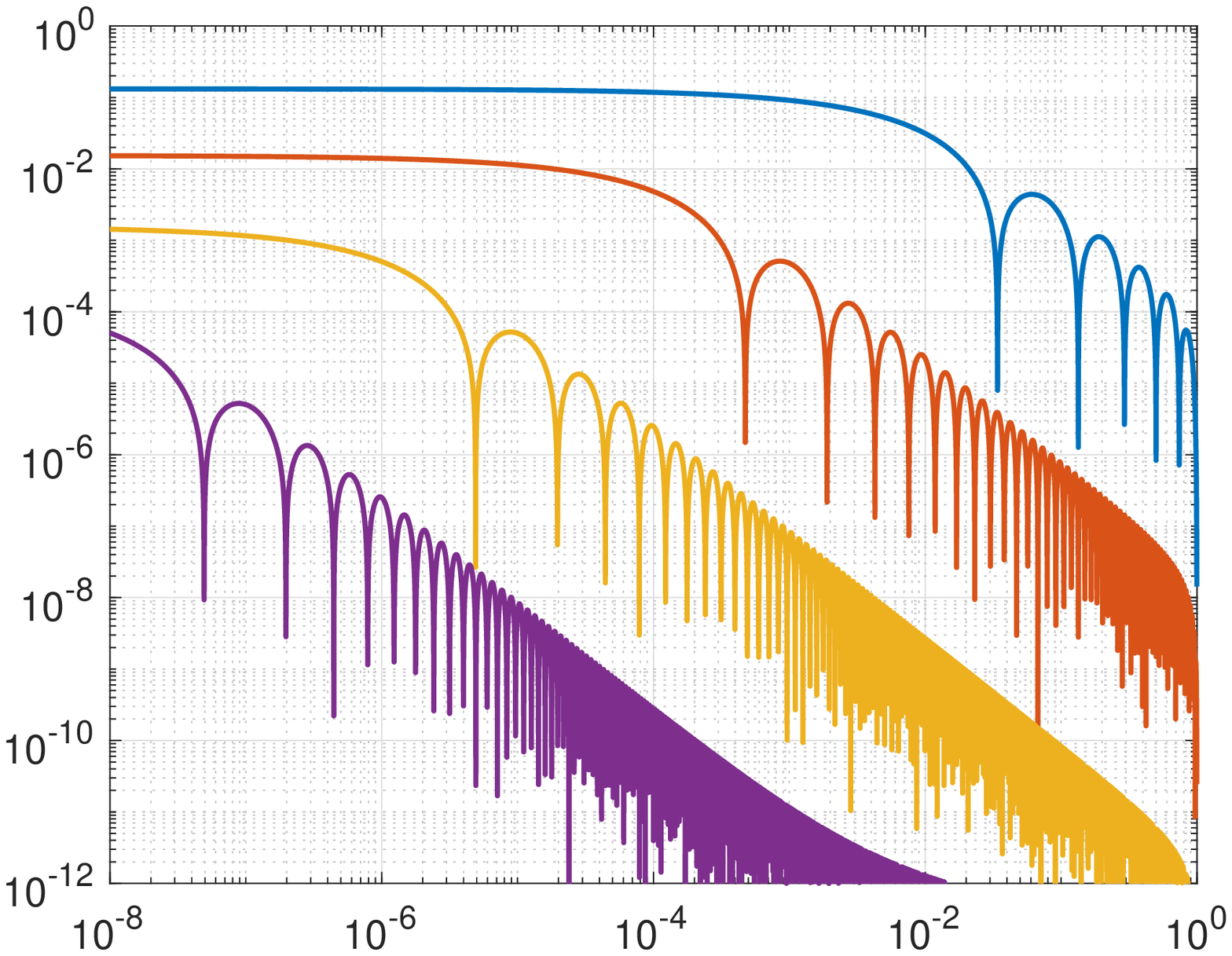} 
  \put(50,0) {$\epsilon$}
 \put(73,61) {\rotatebox{-28}{$n=10$}}
 \put(51,59) {\rotatebox{-28}{$n=100$}}
 \put(32,54) {\rotatebox{-28}{$n=1000$}}
 \put(15,48) {\rotatebox{-32}{$n=10000$}}
  \put(0,13) {\rotatebox{90}{$|f(1-\epsilon)-\bestLone(1-\epsilon)|$}}
  \put(34,72) {$f(x) = \sqrt{1-x^2}$}
 \end{overpic}
  \end{minipage}  
  \begin{minipage}[t]{0.495\hsize}
 \begin{overpic}[width=\textwidth]{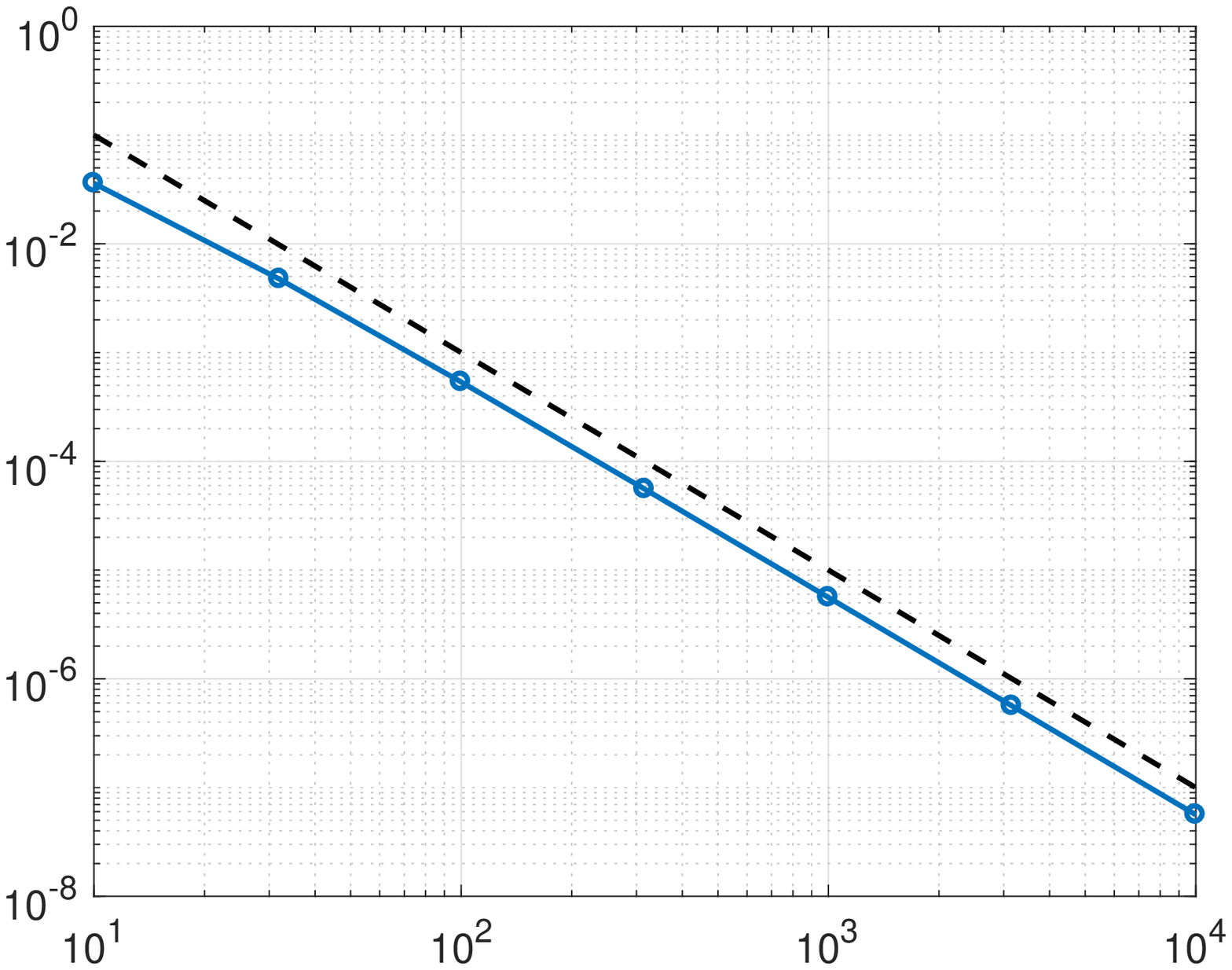} 
 \put(50,42) {\rotatebox{-28}{$\mathcal{O}(n^{-2})$}}
 \put(47,72) {$|\Omega_n|$}
 \put(50,0) {$n$}
 \end{overpic}
  \end{minipage}
\caption{Left: The error $|f(x)-\bestLone(x)|$ for $f(x)=\sqrt{1-x^2}$ with $n = 10$, $100$, $1000$, and $10000$, shown on the interval $[0,1)$. Right: It is observed that $|\Omega_n| = \mathcal{O}(n^{-2})$, showing that the error $|f(x)-\bestLone(x)|$ is highly localized. In particular, we find that $\smash{|f(x)-\bestLone(x)|<\tfrac{1}{2}\| f - \bestLinf \|_{\infty}}$ for all $x\in[-1,1]$ except for a set of measure $<10^{-5}$ near $x=\pm1$ when $n = 1000$.} 
 \label{fig:sqrt1-x2} 
\end{figure}

\subsection{Error localization of best $\mathbf{L_1}$ approximants to $\mathbf{|x|}$}\label{sec:ExampleAbs}
As a second example of error localization, consider $f(x) = |x|$ on $[-1,1]$, which is continuously differentiable except at $x = 0$. The error formula for $\| f-\bestLone \|_{1}$ with $f(x)=|x|$ is calculated in~\cite{brass1988remark} and simplifies to
\[
\| f-\bestLone \|_{1} \sim \frac{8}{\pi n^2}\left(\sum_{\nu =0}^\infty \frac{(-1)^\nu}{(2\nu+1)^3} \right) =  \frac{\pi^2}{4n^2}.
\] 
Moreover, it is known that $\| f - \bestLinf \|_{\infty}\! \sim \frac{\betaConstant}{2n}$ for some $0.28016<\beta<0.28018$~\cite{varga1985bernstein}. We conclude from~\cref{eq:EasyOmegaBound} that $|\Omega_n| \lesssim \frac{\pi^2}{\betaConstant n}$ as $n\rightarrow \infty$.  \Cref{fig:abs} (left) shows the error $|f(x)-\bestLone(x)|$ for $x\in [0,1)$ demonstrating that it is highly localized and~\cref{fig:abs} numerically confirms that $|\Omega_n| = \mathcal{O}(n^{-1})$. 
\begin{figure}
  \begin{minipage}[t]{0.495\hsize}
 \begin{overpic}[width=\textwidth]{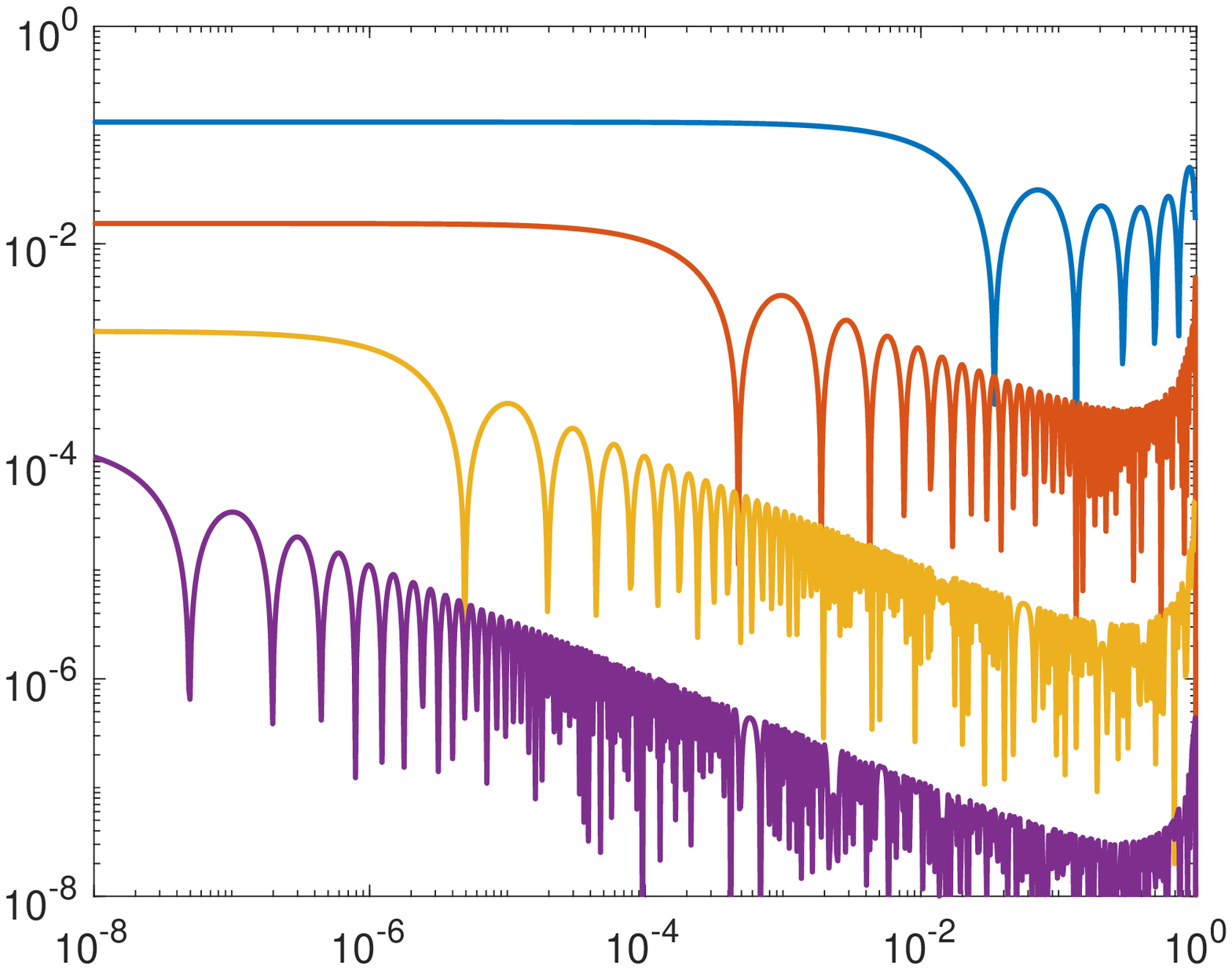} 
  \put(50,0) {$\epsilon$}
  \put(73,61) {\rotatebox{-12}{$n=10$}}
 \put(65,50) {\rotatebox{-20}{$n=100$}}
 \put(54,39) {\rotatebox{-21}{$n=1000$}}
 \put(40,29) {\rotatebox{-21}{$n=10000$}}
  \put(0,13) {\rotatebox{90}{$|f(1-\epsilon)-\bestLone(1-\epsilon)|$}}
   \put(40,72) {$f(x) = |x|$}
 \end{overpic}
  \end{minipage}   
  \begin{minipage}[t]{0.495\hsize}
 \begin{overpic}[width=\textwidth]{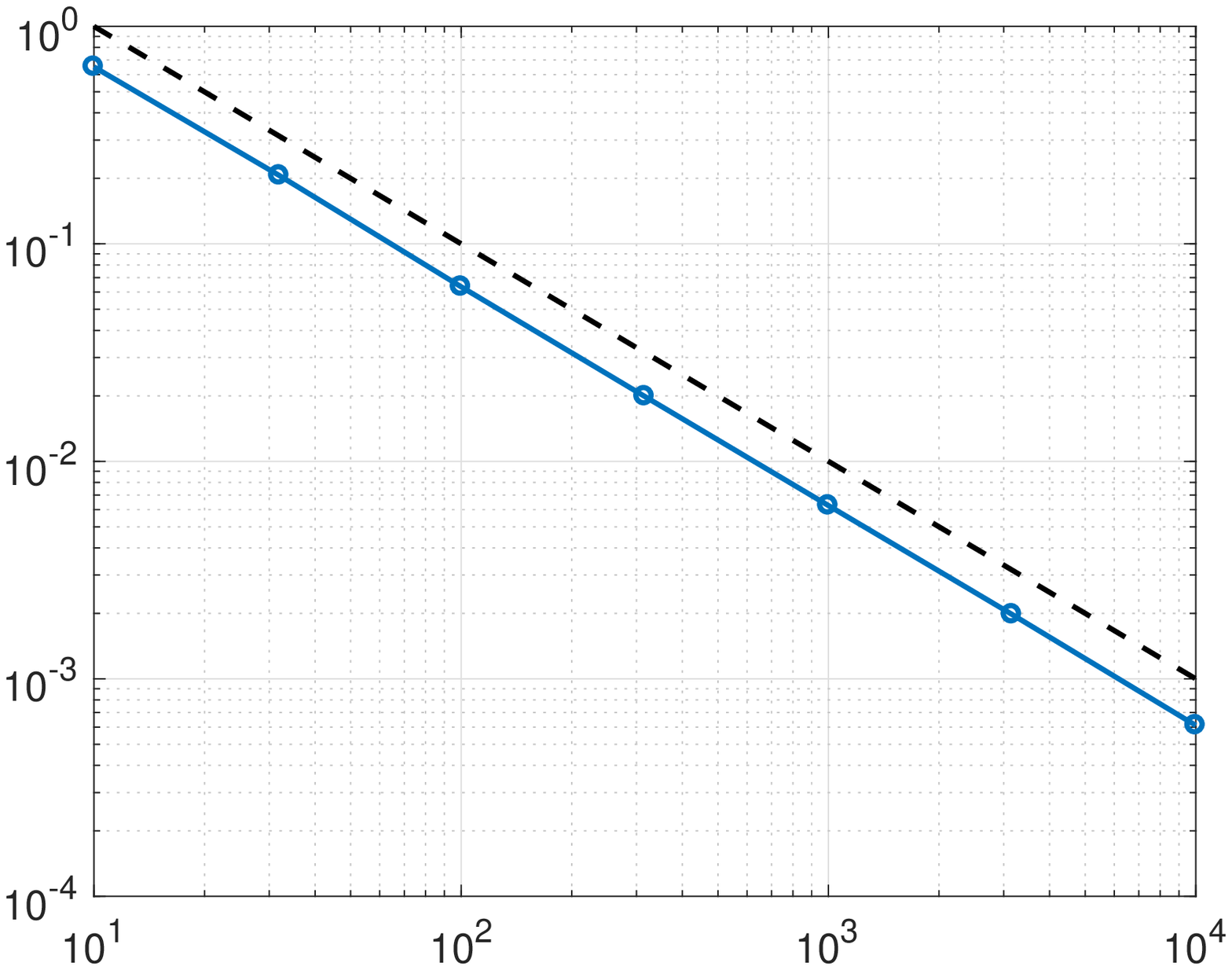} 
  \put(50,50) {\rotatebox{-28}{$\mathcal{O}(n^{-1})$}}
   \put(47,72) {$|\Omega_n|$}
 \put(50,0) {$n$}
 \end{overpic}
  \end{minipage}
 \caption{Left: The error $|f(x)-\bestLone(x)|$ for $f(x)=|x|$ with $n = 10$, $100$, $1000$, and $10000$ shown on the interval $[0,1)$. Right: It is observed that $|\Omega_n| = \mathcal{O}(n^{-1})$. In this example, we find that the error $|f(x)-\bestLone(x)|$ is highly localized near $x=0$ and $x=\pm1$.}
 \label{fig:abs} 
\end{figure}

\section{A globally convergent algorithm for computing best $\mathbf{L_1}$ polynomial approximants}\label{sec:alg}
We now turn to the algorithmic aspects of computing $\bestLone$. We integrate our findings on exact recovery of corrupted polynomials and error localization into Watson's algorithm based on Newton's method~\cite{watson1981algorithm}. An algorithm to compute best $L_1$ approximants with degrees in the thousands is developed based on recent advances in approximation theory such as stable polynomial interpolation, fast domain subdivision, and robust rootfinding implemented in Chebfun~\cite{driscoll2014chebfun}. \Cref{fig:diagram} gives an overview of our algorithm.  
\begin{figure}
\centering
{\footnotesize{
\tikzstyle{decision} = [diamond, draw, fill=white!20,
    text width=6em, text badly centered, node distance=2.1cm, inner sep=0pt]
\tikzstyle{block} = [rectangle, draw, fill=white!20,
    text width=7em, text centered, node distance=2.4cm, rounded corners, minimum height=4em]
\tikzstyle{line} = [draw, thick, color=black, -latex']

\begin{tikzpicture}[scale=2, node distance = 0cm, auto]
    \node [block,text width=5em, node distance=1cm] (prob) {Compute $\pcheb$ of $f$ (see~\cref{eq:ChebyshevInterpolant})};
    \node [decision,right = .35cm of prob,text width=4.5em ,node distance=1cm] (n+1?) {Does $f-\pcheb$ have $n+1$ roots?};
    \node [block, below = .35cm of n+1?,text width=6em, node distance=1cm] (trivial) {$\bestLone=\pcheb$};
    \node [block, right = .45cm of n+1?,text width=4.5em, node distance=1cm] (lp) {Solve \cref{eq:LPy} then LP with refined mesh};
%{Solve LP in~\cref{eq:ell1LP} with refinement};
    \node [block,below = .95cm of lp,text width=4.5em,node distance=1cm] (solnlp) {$\bestLone=\bestlone$};
    \node [block, right = .35cm of lp,text width=4em, node distance=1cm] (newton) {Newton's method 
};
    \node [decision,right = .35cm of newton,text width=5.5em ,node distance=1cm] (conv) {Converged?};
    \node [block,below = .7cm of conv,text width=4.5em,node distance=1cm] (soln) {$\bestLone=\pLP$};
\draw[->,thick] (prob) -- (n+1?);
\node[,above right = .4cm and -.5cm of newton](subdivide){no, iterate};
\node[,below right = .8cm and -.4cm of conv](yes){yes};
\draw[->,thick] (n+1?) -- (lp);
\node[,below right = .6cm and -.6cm of n+1?](triv){yes};
\node[,above right = -.5cm and .5cm of n+1?](nontriv){no};
\draw[->,thick] (n+1?) -- (trivial);
\draw[->,thick] (lp) -- (solnlp);
\node[,below right = .1cm and -.95cm of lp](corrupt){\begin{tabular}{l}corrupted\\ polynomial\end{tabular}};
\draw[->,thick] (lp) -- (newton);
\draw[->,thick] (newton) -- (conv);
\draw[->,thick] (conv) -- (soln);
\draw[|-,-|,->, dashed, thick,-latex] (conv.north)  |-+(0,.25em)-|  (newton.north);
\end{tikzpicture}
}}
\caption{Flowchart for our algorithm to compute the best $L_1$ polynomial approximant of degree $\leq n$ to a continuous function $f$ on $[-1,1]$.}
\label{fig:diagram}
\end{figure}
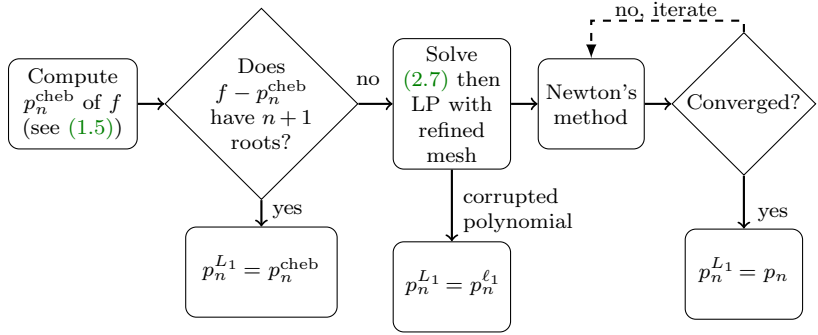

\subsection{Initial attempt: The Chebyshev interpolant}\label{sec:trivial}
The polynomial interpolant $\pcheb$ in~\cref{eq:ChebyshevInterpolant} with $N = n$ can be computed in $\mathcal{O}(n\log n)$ operations~\cite{gentleman1972implementing} and the roots of $f - \pcheb$ on $[-1,1]$ can be computed efficiently when $f$ is a smooth function~\cite{boyd2002computing}. Since $\bestLone = \pcheb$ when $f - \pcheb$ has exactly $n+1$ roots in $[-1,1]$~\cite{pinkus1989}, we recommend that~\cref{eq:ChebyshevInterpolant} is always computed to see if $\bestLone = \pcheb$. When it is, $\bestLone$ is efficient to compute and from practical experience it is relatively common for $\bestLone = \pcheb$ (for example, see~\cite[Lem.~4]{fiedler1990best}). This can happen also when $f$ is a corrupted polynomial.

\subsection{Test for corrupted polynomials and initial guess: Compute $\mathbf{\ell_1}$ minimizer}\label{sec:LP}
When $f - \pcheb$ has $>n+1$ zeros in $[-1,1]$, computing $\bestLone$ is more involved and, in general, requires an iterative procedure. 
In this case, we first solve the discrete $\ell_1$ problem in \cref{eq:discreteell1} to obtain $\bestlone$. This has two purposes: (i) If $f$ is a corrupted polynomial $f=p_m + \omega$ (see \cref{sec:L0L1}), then $\bestlone=p_m=\bestLone$, and (ii) If $f$ is not a corrupted polynomial, then $\bestlone\approx \bestLone$~\cite[Thm.~3.9]{rivlin2003introduction}, which is then used as the initial guess for Newton's method (see~\cref{sec:newton}). 

Specifically, we solve the LP in~\cref{eq:ell1LP} with a large number of samples $N+1$, taking $x_0,\ldots,x_N$ and $\smash{w_j=\pi \sqrt{1-x_j^2}/(N+2)}$ as in~\cref{eq:ChebyshevGrid}.  In our implementation we select $N+1=\max(1000+50n,5000)$. {\RefTwo{(This is an engineering choice that assumes the corruption $k$ is small.)}}
Recall from \cref{thm:MainTheorem} that we want $N+1>6(n+1)k-1$.) 
The maximum value 5000 is set to keep the LP size $2(N+1)+n+1$ manageable.

Once $\bestlone$ is computed, we check whether $f$ is a corrupted polynomial. This can be done by testing if $f(x_j)=\bestlone(x_j)$ holds at most of the sample points to within working precision. If not, then we improve the estimate $\bestlone\approx \bestLone$ by refining the LP mesh, and then proceed to Newton's method. 

\subsubsection{Refinement: Reducing the discretization error}\label{sec:lpm2}
Underlying the minimization problem~\cref{eq:LPy} is an approximate integration of a  non-differentiable function. Specifically, 
\begin{equation}\label{eq:ell1}
\min_{p_n\in\mathcal{P}_n} \! \sum_{i=0}^{N} w_i\left|f(y_i)-p_n(y_i)\right|, \quad \int_{-1}^1 \!\left|f(x)-p_n(x)\right| dx \approx \sum_{i=0}^N w_i\left|f(y_i)-p_n(y_i)\right|.
\end{equation}
Since $|f(x)-p_n(x)|$ is expected to be continuous, but non-differentiable at $\geq n+2$ points, one expects the integration error in~\cref{eq:ell1} to be large and there is little benefit from using a high-order quadrature rules. Indeed using $N+1$ sample points, we find that the LP solution has accuracy $\|\bestlone-\bestLone\|_{1}=\mathcal{O}(N^{-1})$, whether a high-order method (e.g.~Clenshaw-Curtis) or a low-order method (such as the midpoint rule) is used. In more detail, the quadrature error in \cref{eq:ell1} is $\mathcal{O}(N^{-2})$, so the objective function value $\|f-\bestlone\|_1$ is within $\mathcal{O}(N^{-2})$ of optimal: $\|f-\bestlone\|_1=\|f-\bestLone\|_1+\mathcal{O}(N^{-2})$. However, this only implies $\|\bestLone-\bestlone\|_1=\mathcal{O}(N^{-1})$, which is a common phenomenon in optimization: at a global (or local) minimum, an $\epsilon$-perturbation in the solution results in $O(\epsilon^2)$ perturbation in the objective value. This low accuracy of $\bestlone$ can cause convergence issues for Newton's method, when it is used as an initial guess. 

To improve the discretization error in~\cref{eq:ell1}, we follow a three-step procedure: (1) We use the initial LP solution with $N$ points to obtain an $\mathcal{O}(N^{-1})$ approximation to $\bestLone$, which we denote by $\tilde p_n$. (2) The roots $\{r_i\}_{i=1}^K$ of $f - \tilde p_n$ in $[-1,1]$ are computed, which we expect to be $\mathcal{O}(N^{-1})$ approximations to the roots of $f-\bestLone$. 
Finally, (3) we solve another LP to obtain $\bestlone$, which is a better approximant to $\bestLone$ than $\tilde p_n$, with a discretization scheme that forms a finer mesh near the roots: We take $\approx N/2$ points on $\cup_{i=1}^K[r_i-\delta,  r_i+\delta]$, where $\delta = 4/N$, taking equispaced points on each subinterval. We then take $\approx N/2$ more points on $[-1,1]$, outside the subintervals, again uniformly, i.e., the grid is much coarser (see \cref{fig:LPconv} (right)). We take the weights $w_j$ according to the midpoint rule. 
{\RefTwo{We thus take a mesh $O(1/N^2)$ rather than $O(1/N)$ near the roots, while still having a $O(1/N)$ mesh elsewhere.}}
%By taking a finer mesh in a small region where non-smoothness is present, 
This refinement of the quadrature rule is observed to improve the accuracy to $\|\bestlone-\bestLone\|_1=\mathcal{O}(N^{-2})${\RefTwo{, as the quadrature error 
at the roots have been improved from $\mathcal{O}(N^{-2})$ to $\mathcal{O}(N^{-4})$. 
We then solve~\cref{eq:LPy} by a standard technique of casting it as linear programming (LP)~\cite{candes2005decoding}, namely 
\begin{equation}\label{eq:ell1LP}
  \begin{split}
&\minimize_{u_0,\ldots,u_N,v_0,\ldots,v_N, c_0,\ldots,c_n}\quad  \sum_{i=0}^N w_i(u_i+v_i),\\    
&\subjectto \quad u_i\geq 0,\quad  v_i\geq 0, \quad -v_i\leq f(y_i)-\sum_{j=0}^n c_jU_j(y_i) \leq u_i,\quad 0\leq i\leq N,
  \end{split}
\end{equation}
Note that we do not use SPGL1 or the Chebyshev points from~\cref{eq:phimat} in the refinement stage. This is because SPGL1 requires the computation of the null space $V^\top$, which can be more expensive. 
Due to the sparsity structure of LP, we find that the MOSEK optimization toolbox~\cite{mosek} (using its MATLAB interface) offers an efficient solver. 
}}
%Note that to simplify the implementation, \rr{in the second refinement stage} we do not reuse the Chebyshev points from~\cref{eq:phimat}. 
% the LP in~\cref{eq:ell1LP}.

In~\cref{fig:LPconv} (left) we show the error $\|\bestLone-\bestlone\|_{1}$ \rr{with the LP solution} for $10^2\leq N\leq 10^4$, with and without the refinement. Note that the number of decision variables in LP~\cref{eq:ell1LP} is $2(N+1)+n+1$, with $4(N+1)$ inequality constraints. 

\begin{figure}
\centering
  \begin{minipage}[t]{0.495\hsize}
\vspace{-0mm}
 \begin{overpic}[width=.9\textwidth]{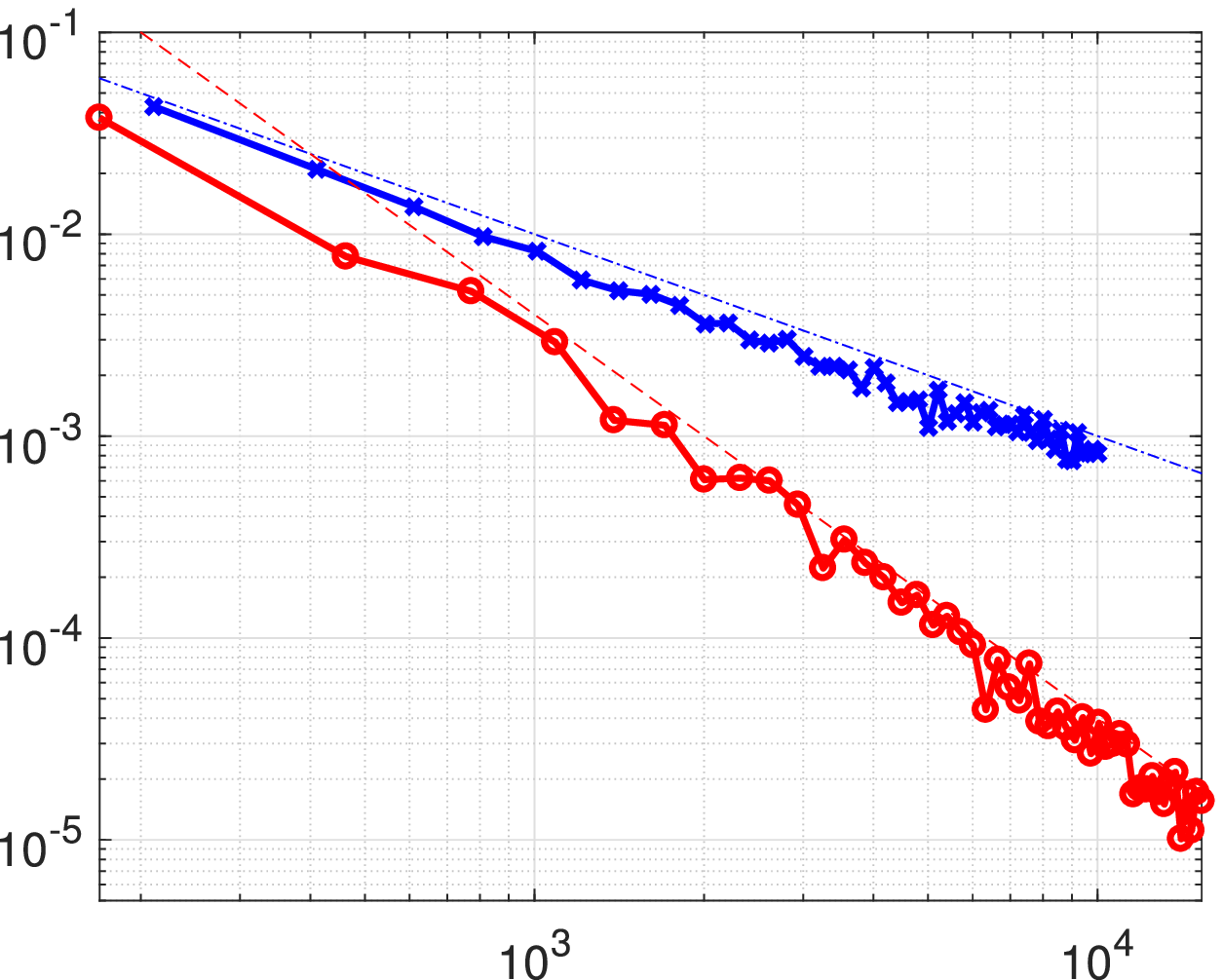}
 \put(48,-1) {LP size $\approx 2N$}
 \put(-7,28) {\rotatebox{90}{$\|\bestLone - \bestlone\|_{1}$}}
 \put(59,45) {\rotatebox{-31}{$\mathcal{O}(N^{-2})$}}
  \put(64,56) {\rotatebox{-20}{$\mathcal{O}(N^{-1})$}}
 \put(70,24) {\rotatebox{-35}{refined LP}}
  \put(84,38) {\rotatebox{-20}{LP}}
 \end{overpic}
  \end{minipage}
  \begin{minipage}[t]{0.495\hsize}%
\vspace{1.5mm}
% \begin{overpic}[width=.9\textwidth]{meshrefineRexpsin10.pdf} 
  \begin{overpic}[width=.9\textwidth]{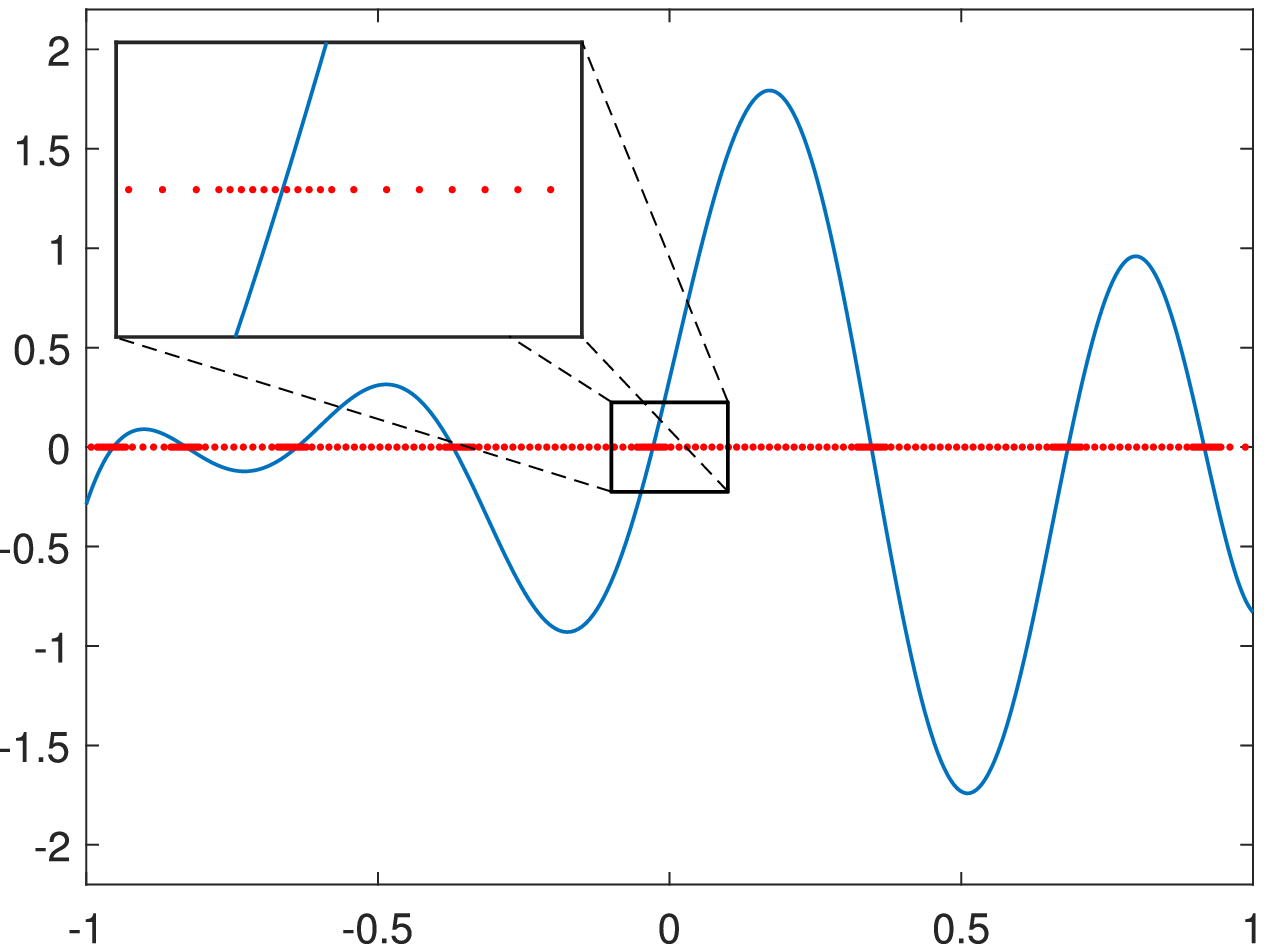} 
 \put(65,66) {\rotatebox{0}{$f(x)-\tilde p_n(x)$}}
 \put(51,-4) {$x$}
 \end{overpic}
  \end{minipage}
  \caption{Left: The error $\|\bestLone - \bestlone\|_{1}$ for $f(x)=\exp(x)\sin(10x)$ and $n=10$ with and without refinement compared against the number of LP variables, which is roughly $2N$. Here, $m$ is the number of sample points used to discretize the continuous $L_1$ optimization problem. Right: Sample points (red dots) used in the refined LP with $n=5$. The mesh is much finer near the roots of $f-\tilde p_n$, so that the discretization error is significantly reduced. Here, $\tilde p_n$ is the solution of the first (unrefined) LP.}
  \label{fig:LPconv}
% matlab file is speedtest.m
\end{figure}

\subsection{Iterative procedure: Newton's method}\label{sec:newton}
To improve the initial guess obtained in~\cref{sec:LP} we employ Newton's method based on the ideas in Watson's algorithm~\cite[Sec.~4]{watson1981algorithm}, which is a globally convergent (under mild assumptions) iterative method for computing $\bestLone$ when the set $S = \{x\in[-1,1] : f(x) = \bestLone(x)\}$ has zero Lebesgue measure. \rr{We assume this below; otherwise $f$ was a corrupted polynomial, which would be detected by~\cref{eq:LPy} if the corruption is small. 
}

When the set $S$ has zero Lebesgue measure, an alternative characterization of $\bestLone$ is~\cite[Thm.~14.1]{powell1981approximation}
\begin{equation}\label{eq:iffcond2}
\int_{-1}^1 s(x)q(x)dx =0 , \quad s(x) = {\rm sign}(f(x)-\bestLone(x)) = \begin{cases} 1, & f(x)-\bestLone(x)\geq 0, \cr 0, &f(x)-\bestLone(x) = 0,\cr -1,& f(x)-\bestLone(x)<0, \end{cases} 
\end{equation}
for all $q\in\mathcal{P}_n$. We propose to apply Newton's method to~\cref{eq:iffcond2}. By using the Chebyshev polynomials of the second kind as a basis for $\mathcal{P}_n$, we define a vector-valued operator $L: \mathbb{R}^{n+1} \mapsto \mathbb{R}^{n+1}$ given by 
\begin{equation}
  \label{eq:mujdef}
L\!\left[(c_0,\ldots,c_n)^\top\right] = \begin{pmatrix}\mu_0,\ldots,\mu_n\end{pmatrix}^\top, \qquad  \mu_j = \int_{-1}^1 {\rm sign}\!\left(f(x)-\sum_{j=0}^n c_j U_j(x)\right)\!  U_i(x) dx.   
\end{equation}
We note that $L[(c_0^*,\ldots,c_n^*)^\top] = \underline{0}$ if and only if $\bestLone = \sum_{j=0}^n c_j^* U_j$ from~\cref{eq:iffcond2}, and we propose to use Newton's method on $L$ to find it.

Newton's method tells us to perform the following iteration: 
\begin{equation} 
\underline{c}^{(k+1)} = \underline{c}^{(k)} - J_k^{-1} L[\underline{c}^{(k)}], \quad (J_k)_{i,j} = \frac{\partial}{\partial c_j^{(k)}}\int_{-1}^1 {\rm sign}\!\left(f(x)-\sum_{t=0}^n c_t^{(k)} U_t(x)\right)\!  U_i(x) dx.
\label{eq:iteration} 
\end{equation} 
Moreover, it can be shown that $J_k$ can be expressed as~\cite{watson1981algorithm}
\begin{equation} 
J_k =2V_k^\top \mbox{diag}\!\left(\frac{1}{e_k'(r_1)},\ldots,\frac{1}{e_k'(r_K)}\right)\!V_k,\qquad e_k(x) = f(x) - \sum_{t=0}^n c_t^{(k)} U_t(x),
\label{eq:Jk} 
\end{equation} 
where $r_1,\ldots,r_K$ are the roots of $e(x)$ and $V_k$ is the Chebyshev--Vandermonde matrix at $r_1,\ldots,r_K$, i.e., $(V_k)_{i,j} = U_j(r_i)$.

At the $k$th Newton iteration, we must calculate the roots of $e_k(x) = f(x) - \smash{\sum_{t=0}^n c_t^{(k)} U_t(x)}$, evaluate 
\rr{$\mu_j$ for $0\leq j\leq n$ and} 
$e_k'(x)$ at $r_1,\ldots,r_K$, form $J_k$ using~\cref{eq:Jk}, and then solve an $(n+1)\times (n+1)$ dense linear system where the righthand side is $L[\underline{c}]$. All these operations can be performed conveniently and robustly in Chebfun to an accuracy of essentially machine precision~\cite{driscoll2014chebfun}. 
{\RefOne{The dominant computation in each Newton's step lies either in the evaluation of $\mu_j$ in~\eqref{eq:mujdef}, which costs
$\mathcal{O}(nm^2)$ where $m$ is the Chebfun degree of $f$, or the linear system $\mathcal{O}(n^3)$, for a total of $\mathcal{O}(n^2(m+n))$ complexity. Typically Newton converges within a handful of iterations. 
%, which involves $K=O(n)$ evaluations of the derivative, 
}}

As Watson notes~\cite{watson1981algorithm}, a small modification for the formula for $J_k$ in~\cref{eq:Jk} is required when $e_k'(r_j)=0$ for some $r_j$, e.g., set $J=I$, or when $V$ is rank-deficient, e.g., set $J:=J+\delta I$ for some small $\delta>0$. Under mild restrictions, this modified Newton's method generically converges to $\bestLone$ at a quadratic rate~\cite{watson1981algorithm}. 

\subsection{Stopping criterion: Near-best condition}\label{sec:stopping} 
It is important to have a stopping criterion to determine when Newton's method in~\cref{eq:iteration} should be terminated. The simplest criterion could be to stop computing iterates as soon as $\|\underline{c}^{(k+1)} - \underline{c}^{(k)}\|_2 < \epsilon \|\underline{c}^{(k)}\|_2$, where $\epsilon>0$ is a small parameter. However, we prefer to stop Newton's method as soon as $\max_{0\leq i\leq n} \left|(L[\underline{c}^{(k)}])_i\right| < \epsilon\|f\|_{1}$ because it leads to a near-best guarantee. 

\begin{theorem}\label{thm:optimality}
Let $f:[-1,1]\rightarrow \mathbb{R}$ be a continuous function and $\underline{c}\in\mathbb{R}^{n+1}$.  If $\tfrac{2}{\pi}(n+2)^2\max_{0\leq i\leq n} \! \left|(L[\underline{c}])_i\right| \!< \!1$, then 
\begin{equation}\label{eq:optbound}
\left\|f-\sum_{j=0}^n c_j U_j \right\|_{1} \leq \left(\frac{1}{1-\frac{2}{\pi} (n+2)^2\max_{0\leq i\leq n} \! \left|(L[\underline{c}])_i\right|}\right) \left\|f-\bestLone\right\|_{1},
\end{equation}
where $U_j$ is the degree $j$ Chebyshev polynomial of the second kind.
\end{theorem}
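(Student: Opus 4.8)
The plan is to show directly that any $p=\sum_{j=0}^n c_jU_j$ whose residual moments $\mu_i:=(L[\underline c])_i=\int_{-1}^1\mathrm{sign}(f-p)\,U_i\,dx$ are small must be nearly optimal, by comparing $\|f-p\|_1$ against $\|f-\bestLone\|_1$ via the triangle inequality and an exact algebraic identity — exactly the style of the proofs of \cref{cor:L1L0condition} and \cref{thm:RecoverySmoothFunctions}, and notably \emph{without} invoking any characterization of $\bestLone$ such as \cref{eq:iffcond2}. Write $p^*:=\bestLone$ and $r:=p^*-p\in\mathcal P_n$.

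The first step uses the elementary pointwise inequality $|u|\ge \mathrm{sign}(v)\,u$, valid for all reals $u,v$ (with $\mathrm{sign}(0)=0$), together with $\mathrm{sign}(v)\,v=|v|$. Applying it pointwise with $v=f(x)-p(x)$ and $u=f(x)-p(x)-r(x)$ gives
\[
\|f-p^*\|_1=\int_{-1}^1\big|(f-p)-r\big|\,dx\;\ge\;\int_{-1}^1\mathrm{sign}(f-p)\big((f-p)-r\big)\,dx\;=\;\|f-p\|_1-\int_{-1}^1\mathrm{sign}(f-p)\,r\,dx .
\]
Expanding $r=\sum_{j=0}^n r_jU_j$ and using linearity, the cross term is the \emph{exact} pairing $\int_{-1}^1\mathrm{sign}(f-p)\,r\,dx=\sum_{j=0}^n r_j\mu_j$, so $\big|\int_{-1}^1\mathrm{sign}(f-p)\,r\,dx\big|\le\big(\max_{0\le i\le n}|\mu_i|\big)\sum_{j=0}^n|r_j|$.

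The second step is a polynomial inequality bounding $\sum_{j=0}^n|r_j|$ by $\|r\|_1$. Using the orthogonality $\int_{-1}^1U_iU_j\sqrt{1-x^2}\,dx=\tfrac{\pi}{2}\delta_{ij}$ we have $r_j=\tfrac{2}{\pi}\int_{-1}^1 r(x)U_j(x)\sqrt{1-x^2}\,dx$, and the bound $\sqrt{1-x^2}\,|U_j(x)|\le 1$ on $[-1,1]$ (the same DLMF estimate used in \cref{prop:Vtranspose}) gives $|r_j|\le\tfrac{2}{\pi}\|r\|_1$, hence $\sum_{j=0}^n|r_j|\le\tfrac{2(n+1)}{\pi}\|r\|_1$. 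Combining this with $\|r\|_1=\|p^*-p\|_1\le\|f-p^*\|_1+\|f-p\|_1\le 2\|f-p\|_1$, where the last inequality holds because $p^*$ is a best $L_1$ approximant in $\mathcal P_n$, yields $\|f-p^*\|_1\ge\big(1-\tfrac{4(n+1)}{\pi}\max_i|\mu_i|\big)\|f-p\|_1$. Since $4(n+1)\le 2(n+2)^2$ for every $n\ge 0$, the hypothesis $\tfrac{2}{\pi}(n+2)^2\max_i|\mu_i|<1$ keeps both relevant denominators positive, and rearranging together with the monotonicity of $x\mapsto(1-x)^{-1}$ gives \cref{eq:optbound}.

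The only genuine content is the polynomial inequality $\sum_{j=0}^n|r_j|\lesssim n\,\|r\|_1$; everything else is the triangle inequality and an exact coefficient identity, so the main obstacle is essentially already dispatched by the argument of \cref{prop:Vtranspose}. An alternative route — bounding $\sum_j|r_j|$ through a Nikolskii-type estimate $\|r\|_\infty\lesssim n^2\|r\|_1$ for $r\in\mathcal P_n$ — would reproduce the $(n+2)^2$ scaling in the statement more directly, though the elementary estimate above already establishes it. One should also note that, in contrast to Watson's characterization in \cref{eq:iffcond2}, no measure-theoretic assumption on $\{x:f(x)=p(x)\}$ is needed: the proof uses only $\mathrm{sign}(v)\,v=|v|$ pointwise, so it applies to an arbitrary continuous $f$.
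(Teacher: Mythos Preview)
Your proof is correct and follows the same overall architecture as the paper: both start from the pointwise inequality $|f-p^*|\ge \mathrm{sign}(f-p)\,(f-p^*)$, expand $r=p^*-p$ in the $U_j$ basis so that the cross term becomes $\sum_j r_j\mu_j$, bound the coefficients $r_j$ via the orthogonality relation $r_j=\tfrac{2}{\pi}\int_{-1}^1 r\,U_j\sqrt{1-x^2}\,dx$, and close with $\|r\|_1\le 2\|f-p\|_1$.

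The one substantive difference is in the coefficient estimate. The paper uses $|U_i(x)|\le i+1$ to obtain $|r_i|\le\tfrac{2}{\pi}(i+1)\|r\|_1$, and then sums $\sum_{i=0}^n(i+1)=(n+1)(n+2)/2$, which is what produces the quadratic factor $(n+2)^2$ in the statement. You instead use the sharper bound $\sqrt{1-x^2}\,|U_j(x)|\le 1$ (the same estimate invoked in \cref{prop:Vtranspose}) to get the uniform bound $|r_j|\le\tfrac{2}{\pi}\|r\|_1$, so $\sum_j|r_j|\le\tfrac{2(n+1)}{\pi}\|r\|_1$. This yields the genuinely stronger inequality
\[
\|f-p\|_1\le\Big(1-\tfrac{4(n+1)}{\pi}\max_i|\mu_i|\Big)^{-1}\|f-\bestLone\|_1,
\]
which you then relax to \cref{eq:optbound} via $4(n+1)\le 2(n+2)^2$. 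So your argument actually improves the near-best constant from quadratic to linear in $n$; the paper's version is recovered as a corollary.
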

\begin{proof}
Let $p_n\in\mathcal{P}$ and define $s_p(x) = \pm {\rm sign}( f(x) - p_n(x) )$ so that $\|f-p_n\|_{1} = \int_{-1}^1 s_p(x) (f(x)-p_n(x)) dx$. Then,
\begin{equation} 
\|f-\bestLone\|_{1} \geq \int_{-1}^1 s_p(x)(f(x)-\bestLone(x))dx = \|f - p_n \|_{1} +\int_{-1}^1 s_p(x)(p_n(x)-\bestLone(x))dx.
\label{eq:inequality1} 
\end{equation} 
Therefore, we find that $\|f - p_n \|_{1}\leq \|f-\bestLone\|_{1} + \int_{-1}^1 s_p(x)(\bestLone(x)-p_n(x))dx$. Expanding $\bestLone - p_n$ in a Chebyshev series, we find that
\[
\bestLone(x) - p_n(x) = \sum_{i=0}^n a_i U_i(x), \qquad a_i = \frac{2}{\pi}\int_{-1}^1 (\bestLone(x) - p_n(x))U_i(x)\sqrt{1-x^2} dx. 
\]
Since $|U_i(x)| \leq (i+1)$ for $x\in [-1,1]$~\cite[(18.14.4) \& (18.7.4)]{dlmf}, we have 
\[
\left|a_i\right| \leq \frac{2}{\pi} (i+1) \|\bestLone - p_n \|_{1} \leq \frac{4}{\pi} (i+1) \|f - p_n \|_{1},
\]
where the last inequality comes from the fact that  $\|\bestLone - p_n \|_{1} \leq \|\bestLone-f\|_{1}+\|f-p_n\|_{1}\leq 2\|f-p_n\|_{1}$. It follows that 
\begin{equation} 
 \begin{aligned}
\left|\int_{-1}^1 s_p(x)(\bestLone(x) - p_n(x))dx\right| &= \sum_{i=0}^n |{\RefTwo a_i}| \left|\int_{-1}^1 s_p(x)U_i(x)dx\right|\\
& \leq \frac{2}{\pi} (n+2)^2 \|f - p_n \|_{1} \max_{0\leq i\leq n} \left|\int_{-1}^1 s_p(x)U_i(x)dx\right|,
\end{aligned}
\label{eq:inequality2} 
\end{equation} 
where the inequality holds since $\sum_{i=0}^n (i+1) {\RefTwo = (n+1)(n+2)/2\leq (n+2)^2/2}$. By using~\cref{eq:inequality2} to bound the righthand side of~\cref{eq:inequality1}, the result follows by rearranging.
\end{proof}

\Cref{thm:optimality} shows that one can track the quantity $\max_{0\leq i\leq n} \left|(L[\underline{c}^{(k)}])_i\right|$ for $k\geq 0$ to estimate how close the current Newton iterate is to computing $\bestLone$. In practice, we terminate Newton's method as soon as $\max_{0\leq i\leq n} \left|(L[\underline{c}^{(k)}])_i\right|<10^{-14}\|f\|_{1}$.  It can happen that the initial guess in~\cref{sec:LP} already satisfies the stopping criteria in which case no Newton iterations are computed. 

\section*{Acknowledgments}
We thank Laurent Demanet for discussing the implications of the Remez inequality with us. We also thank Vanni Noferini who was present during the initial discussions of this work. We thank Nick Trefethen and Heather Wilber for reading a draft of this manuscript and improving the text. 

\bibliographystyle{abbrv}
\bibliography{bestL1bib}

\appendix 
\section{Corruption away from the endpoints}\label{sec:middleCorruption} 
\Cref{lem:L1L0} shows that polynomials of degree $\leq n$ cannot be too concentrated in a set of measure $< \min(1,1/(4n^2))$, which is a consequence of the fact that $|p'(x)|\leq n^2 \|p\|_{\infty}$ for any $p \in \mathcal{P}_n$. An alternative bound on the derivative of a polynomial is~\cite[Ch.~5]{borwein2012polynomials}
\[
|p'(x)|\leq \frac{n}{\sqrt{1-x^2}}\|p\|_{\infty}, \qquad -1<x<1
\]
for any $p\in\mathcal{P}_n$. This inequality is better when $x$ is away from $\pm 1$, and suggests that polynomials of degree $\leq n$ are less concentrated in the middle of $[-1,1]$ compared to near $\pm 1$.  This turns out to be the case. 
\begin{theorem}
Let $\Omega_s\subseteq [-1,1]$ with Lebesgue measure $s\geq 0$ and suppose that $\zeta = \max\{|x| : x\in\Omega_s\}$ is such that $1-\zeta\geq 1/n$. For $n\geq 1$, we have
\begin{equation}
  \label{eq:Middle}
\int_{\Omega_s} |p(x)| dx\leq \frac{sn^{3/2}}{(1-\zeta^2)^{1/4}}\int_{-1}^1 |p(x)| dx
\end{equation}
for any polynomial $p\in\mathcal{P}_n$.
\label{thm:HoleInTheMiddle}
\end{theorem}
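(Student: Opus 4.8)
The plan is to reduce \cref{eq:Middle} to a pointwise (weighted Nikolskii type) estimate on $\max_{[-\zeta,\zeta]}|p|$, and then to prove that estimate by combining two lower bounds for $\|p\|_1=\|p\|_{L^1[-1,1]}$ and taking their geometric mean, which is what manufactures the exponents $n^{3/2}$ and $(1-\zeta^2)^{1/4}$. To begin, every point of $\Omega_s$ has modulus $\le\zeta$, so $\Omega_s\subseteq[-\zeta,\zeta]$ and
\[
\int_{\Omega_s}|p(x)|\,dx\ \le\ s\,M_\zeta,\qquad M_\zeta:=\max_{x\in[-\zeta,\zeta]}|p(x)|=|p(\xi)|
\]
for some $\xi$ with $|\xi|\le\zeta$. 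Hence it suffices to prove $M_\zeta\le \tfrac{n^{3/2}}{(1-\zeta^2)^{1/4}}\|p\|_1$, first with some absolute constant, to be tightened afterwards.

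Next I would bound $\|p\|_1$ from below in two ways in terms of $M_\zeta$ and $\|p\|_\infty$. (i) From \cref{lem:L1L0} applied to a vanishingly small interval about a global maximiser of $|p|$ (equivalently, from $|p'(x)|\le n^2\|p\|_\infty$) one gets $\|p\|_\infty\le\tfrac{(n+1)^2}{2}\|p\|_1$, i.e.\ $\|p\|_1\gtrsim \|p\|_\infty/n^2$. (ii) From the sharper Bernstein inequality $|p'(x)|\le\tfrac{n}{\sqrt{1-x^2}}\|p\|_\infty$ stated above, applied on a two-sided neighbourhood of $\xi$: because $1-\zeta\ge 1/n$, an interval $I=[\xi-\delta,\xi+\delta]$ with $\delta$ of order $1-\zeta$ lies inside $[-1,1]$ and keeps $1-x^2$ of order $1-\zeta^2$ throughout $I$, so $|p'|\lesssim \tfrac{n}{\sqrt{1-\zeta^2}}\|p\|_\infty$ on $I$; integrating the resulting linear lower bound $|p(x)|\ge M_\zeta-\tfrac{n}{\sqrt{1-\zeta^2}}\|p\|_\infty|x-\xi|$ over $I$ gives
\[
\|p\|_1\ \gtrsim\ \min\!\left\{\, M_\zeta\,(1-\zeta^2),\ \ \frac{M_\zeta^{2}\sqrt{1-\zeta^2}}{n\,\|p\|_\infty}\,\right\}.
\]
If the first term is the minimum, then $1-\zeta^2\ge 1-\zeta\ge 1/n\ge n^{-2}$ forces $M_\zeta(1-\zeta^2)\gtrsim n^{-3/2}(1-\zeta^2)^{1/4}M_\zeta$ and we are done; so assume the second term is the minimum.

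Finally, multiplying $\|p\|_1\gtrsim \|p\|_\infty/n^2$ and $\|p\|_1\gtrsim M_\zeta^2\sqrt{1-\zeta^2}/(n\|p\|_\infty)$ cancels $\|p\|_\infty$:
\[
\|p\|_1^2\ \gtrsim\ \frac{M_\zeta^{2}\sqrt{1-\zeta^2}}{n^{3}}\qquad\Longrightarrow\qquad M_\zeta\ \lesssim\ \frac{n^{3/2}}{(1-\zeta^2)^{1/4}}\,\|p\|_1 ,
\]
which together with $\int_{\Omega_s}|p|\le sM_\zeta$ is the claim up to a constant. (Substituting $x=\cos\theta$, so that $p(\cos\theta)$ is a cosine polynomial of degree $n$ and Bernstein's inequality reads $|q'(\theta)|\le n\|q\|_\infty$ with a constant independent of $\theta$, gives an equivalent, slightly cleaner route through the same steps.)

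The conceptual obstacle is that one cannot simply localise the mass of $p$ near $\xi$: the sup-norm $\|p\|_\infty$ that governs $|p'|$ may be attained near $\pm1$, where the Bernstein factor $1/\sqrt{1-x^2}$ is large, so a single local estimate around $\xi$ does not control $\|p\|_1$. Using a second, \emph{global} lower bound for $\|p\|_1$ (essentially Markov's inequality, which captures any mass pushed toward $\pm1$) and multiplying is precisely what removes $\|p\|_\infty$. The remaining work is bookkeeping: to obtain the clean constant $1$ in \cref{eq:Middle} one must track the sharp Bernstein and Markov constants, optimise the radius $\delta$ and the threshold defining where $|p|\ge M_\zeta/2$ (or integrate the linear lower bound exactly), and dispose of the boundary sub-case; the hypothesis $1-\zeta\ge 1/n$ enters exactly here, guaranteeing a genuine two-sided neighbourhood of $\xi$ inside $[-1,1]$ on which $1-x^2\asymp 1-\zeta^2$.
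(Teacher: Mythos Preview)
Your argument is correct and follows essentially the same strategy as the paper: bound $\int_{\Omega_s}|p|\le s\,M_\zeta$ with $M_\zeta=\|p\|_{\Omega_s}$, derive two triangle-type lower bounds for $\|p\|_1$ --- one from Markov's inequality giving $\|p\|_1\gtrsim \|p\|_\infty/n^2$, and one from Bernstein's inequality near the interior maximiser giving $\|p\|_1\gtrsim M_\zeta^{2}\sqrt{1-\zeta^2}/(n\|p\|_\infty)$ --- and combine them so that $\|p\|_\infty$ cancels.

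The one substantive difference is in how the two bounds are combined and, correspondingly, in how the hypothesis $1-\zeta\ge 1/n$ is used. You multiply the two lower bounds for $\|p\|_1$, which gives $\|p\|_1\ge\sqrt{I_1 I_2}$ and needs only that each triangle sits inside $[-1,1]$; this yields the inequality with an absolute constant, as you acknowledge. The paper instead observes that the Markov triangle (near a global maximiser, possibly at $\pm1$) and the Bernstein triangle (near $\xi$, with $|\xi|\le\zeta$) can be taken \emph{disjoint} precisely because $1-\zeta\ge 1/n$, so that $\|p\|_1\ge I_1+I_2\ge 2\sqrt{I_1 I_2}$; minimising $I_1+I_2$ over $X=\|p\|_\infty/M_\zeta$ then produces the constant $1$ on the nose. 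Thus the ``bookkeeping'' you defer is not merely tracking sharp Bernstein/Markov constants: the clean constant comes from summing rather than multiplying, and that is exactly where the paper spends the hypothesis $1-\zeta\ge 1/n$.
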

\begin{proof} 
Let $p\in\mathcal{P}_n$ and let $\|p\|_{\Omega_s}$ denote its absolute maximum in $\Omega_s$. By Bernstein's inequality~\cite[Ch.~5]{borwein2012polynomials} we have that $|p'(x)|\leq n\|p\|_{\infty}/\sqrt{1-\zeta^2}$ for $x\in\Omega_s$ and $|p'(x)|\leq n^2\|p\|_{\infty}$ for $x\in[-1,1]$. 
Let $x^*\in[-1,1]$ be such that $|p(x^*)| = \|p\|_{\infty}$. 
Using these two inequalities, {\RefTwo{we observe that there is an interval $\mathcal{I}\subset[-1,1]$ containing $x^*$ of width at least $1/n^2$ for which $p(x)$ is of the same sign as $p(x^*)$. 
The area of the triangle of width $1/n^2$ and height $|p(x^*)|$
is $I_1=|p(x_*)|/(2n^2)$. 
Next use the same argument for $x^{*,\Omega}\in\Omega_s$ such that $|p(x^{*,\Omega})|=\|p\|_{\Omega_s}$, to obtain 
a triangle with area $\|p\|_{\Omega_s}^2\sqrt{1-\zeta^2}/(2\|p\|_{\infty} n)$. 
Note that since $1-\zeta\geq 1/n$, the two triangles can be chosen to not overlap.}}
We can thus write $\int_{-1}^1 |p(x)|dx \geq I_1 + I_2$. 
%where $I_1=\|p\|_{\infty}/(2n^2)$ is a lower bound from~\cref{lem:L1L0} and $I_2=\|p\|_{\Omega_s}^2\sqrt{1-\zeta^2}/(2\|p\|_{\infty} n)$ a lower bound for the region containing the absolute maximum of $p$ in $\Omega_s$. Note that $I_1$ and $I_2$ can be designed so that they do not overlap since $1-\zeta\geq 1/n$. 

Since $\int_{\Omega_s} |p(x)|dx \leq s \|p\|_{\Omega_s}$, we find that 
\[
\int_{\Omega_s} |p(x)|dx \leq \frac{s}{\frac{X(p)}{2n^2}+\frac{\sqrt{1-\zeta^2}}{2nX(p)}}\int_{-1}^1 |p(x)|dx, \qquad X(p) = \frac{\|p\|_{L_{\infty}}}{\|p\|_{\Omega_s}}.
\]
The function $g(x) = x/(2n^2) + \sqrt{1-\zeta^2}/(2n x)$ on $x\geq 0$ is minimized at $x_* = \sqrt{n}(1-\zeta^2)^{1/4}$. The bound in~\cref{eq:Middle} holds since $g(x) \geq g(x_*) = (1-\zeta^2)^{1/4}n^{-3/2}$ for any $x\geq 0$. 
\end{proof} 
Arguing as in \cref{cor:L1L0condition}, \cref{thm:HoleInTheMiddle} means that a corrupted polynomial of degree $n\geq 1$ can be exactly recovered by $\bestLone$ when $s <(1-\zeta^2)^{1/4}n^{-3/2}/2$ and $1-\zeta\geq 1/n$.  For sufficiently large $n$, this is a relaxation of the requirements for exact recovery in~\cref{sec:L0L1} when the corruption is away from $\pm 1$ (see~\cref{fig:CorruptedFunction} (c) and the localized error near $x=\pm 1$ in \cref{fig:abs}).  
Other results in~\cref{sec:L0L1smooth} can be relaxed by using \cref{thm:HoleInTheMiddle} under the restriction that the corruption occurs away from $\pm 1$. In particular, one can show that if $\Omega_s = [-s/2,s/2]$ with $s = n^{-3/2}/32$, then 
\[
\int_{[-1,1]\setminus \Omega_s} \left|f(x) - \bestLone(x)\right| dx\leq 4 \|f_0 - p_n^*\|_{1}. 
\]
where $p_n^*$ is the best $L_1$ polynomial approximation of $f_0$ on $[-1,1]$. 

\Cref{thm:HoleInTheMiddle} also encourages us to wildly speculate {\RefTwo{(recalling the derivation of~\cref{cor:L1L0condition})}}
that the error localization of $f - \bestLone$ is usually more concentrated for functions with endpoint singularities, i.e., $|\Omega_n| = \mathcal{O}(n^{-2})$, and less concentrated for functions with singularities away from $\pm 1$, i.e., $|\Omega_n| = \mathcal{O}(n^{-1.5})$ or even $\mathcal{O}(n^{-1})$. 
\end{document}